\newtheorem{theorem}{Theorem}[section]
\newtheorem{definition}[theorem]{Definition}
\newtheorem{question}[theorem]{Question}
\newtheorem{remark}[theorem]{Remark}
\newtheorem{lemma}[theorem]{Lemma}
\newtheorem{corollary}[theorem]{Corollary}
\newtheorem{proposition}[theorem]{Proposition}
\def\AA{\mathbb A}
\def\cC{{\mathcal C}^{(\alpha)}}
\def\cD{\mathcal C}
\def\cH{\mathcal H}
\def\a{\alpha}
\def\fqs{\mathbb F_{q^2}}
\def\fq{\mathbb F_q}
\def\deg{{\rm deg}}
\def\a{\alpha}
\def\char{\mbox{\rm Char}}
\title{Intersection of irreducible curves and the Hermitian curve}
\author{Peter Beelen}
\address{Department of Applied Mathematics and Computer Science, Technical University of Denmark, DK-2800, Kongens Lyngby, Denmark}
\email{pabe@dtu.dk}
\author{Mrinmoy Datta}
\address{Department of Mathematics, \newline \indent
Indian Institute of Technology Hyderabad, Kandi, Sangareddy, Telangana, Pin 502285. India}
\email{mrinmoy.datta@math.iith.ac.in}
\author{Maria Montanucci}
\address{Department of Applied Mathematics and Computer Science, Technical University of Denmark, DK-2800, Kongens Lyngby, Denmark}
\email{marimo@dtu.dk}
\author{Jonathan Niemann}
\address{Department of Applied Mathematics and Computer Science, Technical University of Denmark, DK-2800, Kongens Lyngby, Denmark}
\email{jtni@dtu.dk}
\keywords{Hermitian curves, Irreducible curves, Intersections of curves}
\subjclass[2010]{Primary 11G20, 14G05, 14C17, 14G15}
\begin{document}

\maketitle

\begin{abstract}
Let $\cH_q$ denote the Hermitian curve in $\mathbb{P}^2$ over $\fqs$ and $\cD_d$ be an irreducible plane projective curve in $\mathbb{P}^2$ also defined over $\fqs$ of degree $d$. Can $\cH_q$ and $\cD_d$ intersect in exactly $d(q+1)$ distinct $\fqs$-rational points? B\'ezout's theorem immediately implies that $\cH_q$ and $\cD_d$ intersect in at most $d(q+1)$ points, but equality is not guaranteed over $\fqs$. In this paper we prove that for many $d \le q^2-q+1$, the answer to this question is affirmative. The case $d=1$ is trivial: it is well known that any secant line of $\cH_q$ defined over $\fqs$ intersects $\cH_q$ in $q+1$ rational points. Moreover, all possible intersections of conics and $\cH_q$ were classified in \cite{DDK} and their results imply that the answer to the question above is affirmative for $d=2$ and $q \ge 4$, as well. However, an exhaustive computer search quickly reveals that for $(q,d) \in \{(2,2),(3,2),(2,3)\}$, the answer is instead negative. 
We show that for $q \le d \le q^2-q+1$, $d=\lfloor(q+1)/2\rfloor$ and  $d=3$, $q \geq 3$ the answer is again affirmative. Various partial results for the case $d$ small compared to $q$ are also provided.
\end{abstract}
%


\section{Introduction}

When working with projective algebraic varieties and applications thereof, it can happen quite regularly that the intersection of such varieties need to be studied. For example, in algebraic coding theory, one often is led to study how many rational points on a variety $V$ can be zeroes of a given class of algebraic functions $f$ on $V$. This amounts to studying how many common rational points the variety $V$ and the hypersurface defined by $f$ can have. This background was the motivation of a question posed by S\o rensen \cite[Page 9]{SoT} concerning the maximum number of common $\fqs$-rational points that a nondegenerate Hermitian surface $\cH_q^{(2)} \subset \mathbb{P}^3$ defined over $\fqs$, can have with a degree $d$ surface $S \subset \mathbb{P}^3$. 
For $d \le q$, S\o rensen conjectured that 
$$|(S \cap \cH_q^{(2)})(\fqs)| \le d (q^3 + q^2 - q) + q + 1$$
and that equality can be obtained only by varieties $S$ that are the union of $d$ planes in $\mathbb{P}^3$ defined over $\fqs$ that are tangent to $\cH_q^{(2)}$, each containing a common line $\ell$ intersecting $\cH_q^{(2)}$ at $q+1$ points.
For $d=1$ this follows directly from classical results on Hermitian varieties \cite{BC,C}. Further, a proof of this conjecture was given for $d=2$ in \cite{Edoukou1}, for $d=3$ in \cite{Beelen_Datta_2020} and for general $d$ in \cite{Beelen_Datta_2021}. In a similar vein, Edoukou studied intersections of a Hermitian threefold $\cH_q^{(3)} \subset \mathbb{P}^4$ and a degree $d \le q$ hypersurface $S \subset \mathbb{P}^4$ and conjectured in \cite{Edoukou2} that 
$$|(S \cap \cH_q^{(3)})(\fqs)| \le d (q^5 + q^2) + q^3 + 1$$
as well as that equality only occurs in case $S$ is a union of $d$ hyperplanes all defined over $\fqs$ containing a common plane that intersects $\cH_q^{(3)}$ in a Hermitian curve.
Edoukou proved his conjecture for $d=2$, while recently the conjecture was proved for $d=3$ and $q \ge 7$ in \cite{Datta_Manna}.

It is striking that both in S\o rensen's and Edoukou's conjecture, the (hyper)surface $S$ attaining the bound is conjectured to be highly reducible. A similar conjecture exists for higher dimensional Hermitian varieties. It is easy to overlook the at first sight simpler question on how many $\fqs$-rational points a Hermitian curve $\cH_q^{(1)}$ in $\mathbb{P}^2$ and another plane curve of degree $d \le q$ can have in common. For simplicity we will write $\cH_q:=\cH_q^{(1)}$ from now on. A Hermitian curve can be given by various plane models, but all of the resulting curves are projectively equivalent. Therefore one can talk about \emph{the} Hermitian curve and choose a different convenient model whenever the need arises. We will typically consider the models $Y^{q}Z + YZ^q  = X^{q+1}$ and $X^{q+1} + Y^{q+1} + Z^{q+1} = 0$.  

B\'ezout's theorem immediately implies that the number of intersection points, let alone $\fqs$-rational intersection points, is at most $d(q+1)$. Moreover, equality can be obtained by choosing the degree $d$ curve to be the union of $d$ lines defined over $\fqs$, not tangent to the Hermitian curve, and with a common intersection point $P$ also defined over $\fqs$, but not on the given Hermitian curve. 
It is therefore natural to think that perhaps any degree $d$ plane curve intersecting a given Hermitian curve in $d(q+1)$ $\fqs$-rational points, needs to be reducible, at least if $d>1$. However, in \cite{DDK} a complete classification of all possible intersections of conics and $\cH_q$ was given. From their results, it becomes clear that there exist absolutely irreducible conics that intersect $\cH_q$ in $2(q+1)$ $\fqs$-rational points. This motivates the following question:
\begin{question}\label{question}
Given the Hermitian curve $\cH_q$ in $\mathbb{P}^2$ over $\fqs$ and an irreducible plane projective curve $\cD_d$ in $\mathbb{P}^2$ also defined over $\fqs$ of degree $d$. Assume that $\cH_q$ and $\cD_d$ are distinct curves. Can these curves intersect in exactly $d(q+1)$ distinct $\fqs$-rational points? 
\end{question}

As mentioned previously, B\'ezout's theorem immediately implies that $\cH_q$ and $\cD_d$ intersect in at most $d(q+1)$ points (and typically in exactly $d(q+1)$ points), but the point of Question \ref{question} is that we want all points to be $\fqs$-rational. 

We will see in the following sections, that for many $d \le q^2-q+1$, the answer to Question \ref{question} is affirmative. The case $d=1$ is trivial: it is well known that any secant line of $\cH_q$ defined over $\fqs$ intersects $\cH_q$ in $q+1$ rational points. Moreover, we already pointed out that all possible intersections of conics and $\cH_q$ were classified in \cite{DDK} and that their results imply that the answer to Question \ref{question} is affirmative for $d=2$ and $q \ge 4$. However, an exhaustive computer search quickly reveals that for $(q,d) \in \{(2,2),(3,2),(2,3)\}$, Question \ref{question} has a negative answer. 

In this article, we will show for several degrees that Question \ref{question} has an affirmative answer. Large values of $d$ (with respect to $q$) are analyzed in Section \ref{section_d_large}, while the case $d$ small is studied in Section \ref{sec:degree_small}. More precisely, we show that Question 1.1 has an affirmative answer for $q \le d \le q^2-q+1$ in Subsection \ref{sec:degree_large}, for $d=\lfloor(q+1)/2\rfloor$ in Subsection \ref{sec:degree_q_halve} and in Section \ref{sec:degree_small} for $d=3$. Section \ref{sec:degree_small} also contains various partial results in other cases where $d$ is small compared to $q$.

\section{Results for large $d$.} \label{section_d_large}

In this section, we analyze Question 1.1 in the case where $d$ is large. More precisely, we will prove that Question 1.1 has a positive answer when $q  \leq d \leq q^2 - q+1$. Moreover, we will show that for $d=\lfloor(q+1)/2\rfloor$ there exists a plane absolutely irreducible curve of degree $d$ intersecting the Hermitian curve at exactly $d (q+1)$ many distinct $\fqs$-rational points.

\subsection{Degree $d \ge q$.}\label{sec:degree_large}
Here, we show the existence plane absolutely irreducible curve of degree $d$ intersecting the Hermitian curve at exactly $d (q+1)$ many distinct $\fqs$-rational points. We remark that, in view of Bezout's Theorem and the fact that the Hermitian curve contains exactly $q^3 + 1$ many $\fqs$-rational points, Question 1.1 is interesting in the case when $d \le q^2 - q + 1$. 
Thus, in the remainder of this section, we may assume that $q \le d \le q^2-q+1$. It is well-known how two distinct plane Hermitian curves in the plane can intersect each other. In \cite{DD}, it is shown that two distinct Hermitian curves can intersect each other in $(q+1)^2$ many distinct $\fqs$-rational points provided $q \ge 3$. In particular, Question \ref{question} has an affirmative answer for $d=q+1$. In the next theorem, we give a different approach that turns out to work not only for $d=q+1$ but for all $d$ between $q+1$ and $q^2-q$. For our convenience, in this subsection, we will assume that $\cH_q$ is defined by the equation $Y^qZ+YZ^q=X^{q+1}$.

\begin{theorem}\label{thm:d_large}
If $q+1 \le d \le q^2-q$, then there exists an absolutely irreducible curve $\cC_d$ of degree $d$ intersecting $\cH_q$ in exactly $d(q+1)$ distinct $\fqs$-rational points.     
\end{theorem}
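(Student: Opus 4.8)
The plan is to realize $\mathcal{C}_d$ as a small deformation of a union of horizontal lines, exploiting the fact that adding a multiple of the Hermitian polynomial $H = Y^qZ+YZ^q-X^{q+1}$ does not change the intersection with $\cH_q$. Working in the affine chart $Z=1$, so that $\cH_q$ is $y^q+y=x^{q+1}$, a horizontal line $y=\beta$ with $\beta\in\fqs$ meets $\cH_q$ exactly where $x^{q+1}=\beta^q+\beta$. The point $[1:0:0]$ at infinity of this line is not on $\cH_q$, and whenever the trace $\beta^q+\beta\in\fq$ is nonzero, the norm equation $x^{q+1}=\beta^q+\beta$ has $q+1$ distinct $\fqs$-rational solutions. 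Since $\partial(y^q+y-x^{q+1})/\partial y = 1$, the line is never tangent to $\cH_q$, so the intersection is transverse. There are exactly $q^2-q$ elements $\beta\in\fqs$ of nonzero trace, so, using $d\le q^2-q$, I can pick $d$ distinct values $\beta_1,\dots,\beta_d$ of nonzero trace; the lines $y=\beta_i$ meet $\cH_q$ in pairwise disjoint sets, yielding a reduced divisor $D_0$ consisting of exactly $d(q+1)$ distinct $\fqs$-rational points.

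Next I would set
\[
\mathcal{C}_d:\qquad \prod_{i=1}^d (Y-\beta_i Z) + c\,Z^{\,d-q-1}\bigl(Y^qZ+YZ^q-X^{q+1}\bigr) = 0,
\]
a form of degree $d$ (the hypothesis $d\ge q+1$ guarantees $d-q-1\ge 0$). Because the second summand is divisible by $H$, it vanishes on $\cH_q$, so $\mathcal{C}_d$ and $\cH_q$ cut out the very same divisor $D_0$ of $d(q+1)$ distinct $\fqs$-rational points. Dehomogenizing, the affine equation of $\mathcal{C}_d$ rewrites as $x^{q+1}=R_c(y)$ with $R_c(y)=\tfrac1c\prod_{i=1}^d(y-\beta_i)+(y^q+y)$ a polynomial of degree $d$ (its leading term is $\tfrac1c y^d$ since $d>q$). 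Thus $\mathcal{C}_d$ is a superelliptic-type curve, and it only remains to make it absolutely irreducible; it is automatically different from $\cH_q$, since its defining form involves $Y^d$ while no scalar multiple of $H$ does.

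The heart of the argument, and the step I expect to require the most care, is absolute irreducibility. Over $\overline{\fqs}$ the curve $x^{q+1}=R_c(y)$ is absolutely irreducible as soon as $R_c$ is not a perfect $\ell$-th power for any prime $\ell\mid q+1$ (note $\fqs$ contains the $(q+1)$-th roots of unity and $p\nmid q+1$); in particular it suffices that $R_c$ be squarefree, its degree being $d\ge 2$. I would secure squarefreeness by controlling the scalar $c$. Writing $P=\prod_i(y-\beta_i)$, a repeated root $y_0$ of $R_c$ forces $R_c(y_0)=R_c'(y_0)=0$; since $(y^q+y)'=1$, this gives $c=-P'(y_0)$ and shows that $y_0$ is a root of the fixed polynomial $Q(y):=(y^q+y)P'(y)-P(y)$, which is independent of $c$. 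Hence the bad values of $c$ lie in $\{-P'(y_0):Q(y_0)=0\}$, a set of size at most $\deg Q$.

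A direct degree count gives $\deg Q\le d+q-1$, so for $d\le q^2-q-1$ the number of bad $c$ is at most $q^2-2<q^2-1=|\fqs^\times|$, and a good $c$ exists. The only delicate point is the boundary degree $d=q^2-q$, where the crude bound $\deg Q\le d+q-1=q^2-1$ would exactly exhaust $\fqs^\times$; there one uses that $p\mid d$, whence the top coefficient of $P'$ vanishes and $\deg Q\le q^2-2$, again leaving a good $c\in\fqs^\times$. For such $c$ the polynomial $R_c$ is squarefree, so $\mathcal{C}_d$ is an absolutely irreducible curve of degree $d$ meeting $\cH_q$ in precisely the $d(q+1)$ distinct $\fqs$-rational points of $D_0$, completing the construction.
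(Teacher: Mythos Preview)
Your construction is exactly the one in the paper: both form $\mathcal C_d$ by adding a scalar multiple of $Z^{d-q-1}(Y^qZ+YZ^q-X^{q+1})$ to the product $\prod_i(Y-\beta_iZ)$ of $d$ secant lines through $[1:0:0]$, so the intersection with $\cH_q$ is automatically the $d(q+1)$ points on those lines. Where you diverge is in the irreducibility argument. The paper fixes the scalar so that the constant term of $R_c$ is a primitive element of $\fqs$; this yields irreducibility over $\fqs(y)$ via Kummer, and absolute irreducibility is then obtained indirectly by observing that otherwise every $\fqs$-rational point would be singular, contradicting an explicit singular-point count. Your route is more direct: you leave $c$ free and show that at most $\deg Q\le d+q-1\le q^2-2$ values of $c$ (with the boundary case $d=q^2-q$ handled via $p\mid d$) can give $R_c$ a repeated root, so some $c\in\fqs^\times$ makes $R_c$ squarefree and hence $x^{q+1}=R_c(y)$ absolutely irreducible in one stroke. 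Both arguments are short; yours avoids the singularity analysis, while the paper's exhibits an explicit good scalar.

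One small point you should make explicit: the bound ``at most $\deg Q$'' presupposes $Q\not\equiv 0$. This is immediate, since $Q(\beta_i)=(\beta_i^q+\beta_i)P'(\beta_i)\neq 0$ (the $\beta_i$ have nonzero trace and $P$ is squarefree so $P'(\beta_i)\neq 0$), but it deserves a sentence.
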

\begin{proof}
As mentioned above, let $\cH_q$ be given by the equation $Y^qZ+YZ^q=X^{q+1}$. 
Define $S=\{b \in \fqs \mid b^q+b \neq 0\}$. It is evident that $|S| = q^2-q$.  Note that $b \in S$ if and only if the line $Y - bZ = 0$ passing through $[1:0:0]$ is a secant line to $\cH_q$. Fix a positive integer $d$ with $q+1 \le d \le q^2 - q$. 
 Choose any $d$ distinct elements from $S$, say $b_1,\dots,b_d$ and consider the curve $\cC_d$ given by the equation $$(X^{q+1}-Y^qZ-YZ^q)Z^{d-q-1}=\alpha\prod_{i=1}^d (Y-b_iZ),$$ with $\alpha \in \fqs\setminus \{0\}$. As mentioned above, each line $\ell_i$ given by the equation $Y - b_i Z = 0$ is a secant line and consequently intersects the Hermitian curve at exactly $q+1$ distinct $\fqs$-rational points. Moreover, the point $[1:0:0]$ that is common to $\ell_1, \dots, \ell_d$ does not lie on the Hermitian curve. Consequently, the curve given by the equation $\prod_{i=1}^d (Y-b_iZ) = 0$ which is a union of $d$ lines $\ell_1, \dots, \ell_d$ intersects the Hermitian curve at exactly $d(q+1)$ many distinct $\fqs$-rational points. It follows that $\cC_d$ and $\cH_q$ intersect precisely in the $d(q+1)$ distinct $\fqs$-rational points of $\cH_q$ lying on one of the $d$ secant lines $\ell_1, \dots, \ell_d$. 

We now show that there exists $\alpha \in \fqs$ such that the curve $\cC_d$ is absolutely irreducible. To this end, we choose $\alpha$ such that $\alpha\prod_{i=1}^d (-b_i)$ is a primitive element of $\fqs$. This is possible, since $\prod_{i=1}^d (-b_i) \neq 0$ as follows from the defining condition of $S$, i.e., from $b_i^q+b_i \neq 0$. Since $\alpha\prod_{i=1}^d (-b_i)$ does not have an $e$-th root in $\fqs$, for any  $e \mid q+1$, it follows from the theory of Kummer extensions (see \cite[Proposition 3.7.3]{Stichtenoth}) that the polynomial  $(X^{q+1}-0^qZ-0Z^q)Z^{d-q-1}-\alpha\prod_{i=1}^d (0-b_iZ)$ is irreducible over $\fqs$.
Suppose, if possible, that $\cC_d$ is not absolutely irreducible. Then all its $\fqs$-rational points lie on the intersection of the absolutely irreducible components of $\cC_d$. In particular, all the $\fqs$-rational points on $\cC_d$ would be singular points. It is easily checked that $\cC_d$ at most $d$ singular points if $d>q+1$ and at most $d+q$ singular points if $d=q+1$ over an algebraic closure of $\fqs$. However, since $\cC_d$ intersects $\cH_q$ at exactly $d(q+1)$ distinct $\fqs$-rational points, it must admit at least $d(q+1)$ many distinct $\fqs$-rational points. This leads to a contradiction since $d + q < d(q+1)$.  Thus $\cC_d$ must be absolutely irreducible. 
\end{proof}

In view of Theorem \ref{thm:d_large}, we have settled Question \ref{question} affirmatively for $d \ge q+1$, except for $d=q^2-q+1$. Before we produce a curve of degree $q^2 - q + 1$ with the desired property, we just remark that such a curve should contain all the $\fqs$-rational points on the Hermitian curve, even though the Hermitian curve is not an irreducible component of the curve. As in the previous theorem, we will still use the Hermitian curve given by the equation $Y^qZ+YZ^q=X^{q+1}$.

\begin{theorem}
Let $\cD_{q^2-q+1}$ be the curve defined over $\fqs$ given by the equation
$$X \left( (Y^q+YZ^{q-1})^{q-1}-Z^{q^2-q} \right)+X^{q+1}Z^{q^2-2q}-Y^qZ^{q^2-2q+1}-YZ^{q^2-q}=0.$$    
Then $\cD_{q^2-q+1}$ is an absolutely irreducible curve of degree $q^2-q+1$ intersecting the Hermitian curve 
in exactly $q^3+1$ distinct $\fqs$-rational points.
\end{theorem}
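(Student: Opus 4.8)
The plan is to establish, in order, the degree, the point containment, the exactness of the intersection, and finally absolute irreducibility; I expect only the last to be genuinely hard. Writing $F$ for the homogeneous polynomial defining $\cD_{q^2-q+1}$, each of its five monomials has degree $q^2-q+1$ and the monomial $XY^{q^2-q}$ (coming from $X(Y^q)^{q-1}$) is not cancelled, so $\deg\cD_{q^2-q+1}=q^2-q+1$. For the point containment I would pass to the affine chart $Z=1$ and set $f(x,y):=F(x,y,1)$. The key is the polynomial identity $f=x^{q^2}-x+H\,G$, where $H:=y^q+y-x^{q+1}$ is the affine equation of $\cH_q$ and $G\in\fqs[x,y]$; equivalently, substituting $y^q+y=x^{q+1}$ collapses $f$ to $x^{q^2}-x$. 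Hence at every affine $\fqs$-rational point of $\cH_q$ one gets $f=x^{q^2}-x=0$, since there $x\in\fqs$. Together with the direct check $F([0:1:0])=0$ for the unique point at infinity of $\cH_q$, this places all $q^3+1$ rational points of $\cH_q$ on $\cD_{q^2-q+1}$.

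Next I would invoke B\'ezout. The same identity shows $H\nmid F$ (a common factor would divide $x^{q^2}-x$, which is free of $y$, whereas $H$ is not), so $\cD_{q^2-q+1}$ and $\cH_q$ have no common component and $\sum_P I_P(\cD_{q^2-q+1},\cH_q)=(q^2-q+1)(q+1)=q^3+1$. As $q^3+1$ distinct intersection points have already been produced, each contributing at least $1$, every local intersection multiplicity equals $1$. In particular the intersection is exactly these $q^3+1$ points, and each of them is a \emph{smooth} point of $\cD_{q^2-q+1}$.

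For absolute irreducibility my plan is to reduce to the cheaper statement that $f$ is irreducible over $\fqs$. Indeed, if $f$ were $\fqs$-irreducible but $\cD_{q^2-q+1}$ not absolutely irreducible, then its absolutely irreducible components would form a single $\mathrm{Gal}(\overline{\fqs}/\fqs)$-orbit of size $k\ge 2$, none defined over $\fqs$; every $\fqs$-rational point would then lie on at least two conjugate components and hence be singular, contradicting the smoothness of the $q^3+1$ Hermitian points just established. (In support of this one can check that the only singular point of $\cD_{q^2-q+1}$ is $[1:0:0]\notin\cH_q$.) It therefore remains to prove that $f$ is irreducible over $\fqs$.

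To this end I would regard $f$ as the trinomial $f=x^{q+1}+(c^{q-1}-1)x-c$ in $x$ over $\fqs(y)$, where $c=y^q+y$. The idea is to (i) use the Newton polygons of $f$ at the simple zeros of $c$ and at $y=\infty$ to restrict the $x$-degrees a nontrivial factor can have, and then (ii) exclude the surviving possibilities: first a root $\rho(y)\in\fqs(y)$ (a linear factor in $x$), and then any splitting into factors of $x$-degree at least two. I expect (ii) to be the main obstacle: at both $y=0$ and $y=\infty$ the relevant residual polynomials turn out to be perfect $q$-th powers (such as $(x-1)^q$), so the plain Newton-polygon test does not decide whether the degree-$q$ part splits further, and a finer analysis — higher Puiseux terms, or passing to the tower $\cD_{q^2-q+1}\to\{\,xv^{q-1}-v-x+x^{q+1}=0\,\}\to\P^1$ induced by $v=y^q+y$ and treating the two layers separately — appears necessary. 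That some genuine input is needed is clear from the case $q=2$, where $f=(x+1)(x^2+x+y^2+y)$ is reducible; this is precisely why the statement requires $q\ge 3$, and any argument must genuinely use this hypothesis.
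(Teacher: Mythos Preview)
Your treatment of the degree, the containment of all $q^3+1$ rational points of $\cH_q$, and the B\'ezout exactness step is correct and matches the paper's (which uses the same congruence $X^{q^2}-X\equiv f \pmod{H}$). Extracting smoothness at every intersection point from equality in B\'ezout, and then reducing absolute irreducibility to $\fqs$-irreducibility via the fact that on an $\fqs$-irreducible but absolutely reducible curve every $\fqs$-rational point is singular, is a valid and pleasant manoeuvre that the paper does not take.

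The gap is that you do not actually establish irreducibility: you diagnose that the Newton-polygon approach stalls and point to the tower through $v=y^q+y$ as the way out, but stop there. That tower is exactly how the paper proceeds, and it yields \emph{absolute} irreducibility directly (making your singularity reduction unnecessary, though still correct). Concretely, set $v=y^q+y$ and consider $g(x,v)=x(v^{q-1}-1)+x^{q+1}-v$. Substituting $\bar x=1/x$, $\bar v=x/v$ gives $(1-\bar x)^q\bar v^{\,q-1}-\bar x^{\,q}\bar v^{\,q-2}+\bar x=0$, to which Eisenstein at the zero of $\bar x$ applies (this needs $q-1\ge 2$); hence $g$ is absolutely irreducible and $[\fqs(x,v):\fqs(x)]=q-1$, $[\fqs(x,v):\fqs(v)]=q+1$. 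The Artin--Schreier extension $\fqs(v,y)/\fqs(v)$ has degree $q$, and since $\gcd(q,q+1)=1$ the compositum $\fqs(x,y)$ has degree $q(q+1)$ over $\fqs(v)$, whence $[\fqs(x,y):\fqs(x)]=q(q-1)=\deg_y f$. Thus $f$ is absolutely irreducible. Your $q=2$ factorisation is on the mark: the Eisenstein step genuinely fails there (indeed $g=(x-1)(x^2+x+v)$), so the statement tacitly assumes $q\ge 3$.
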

\begin{proof}
Viewing the line given by $Z=0$ as the line at infinity, we see that $\cH_q$ has exactly one point at infinity. As a consequence, the Hermitian curve $\cH_q$ has exactly $q^3$ many $\fqs$-rational points on the affine plane given by $Z = 1$.   
Observe that
\begin{eqnarray*}
X^{q^2}-X & = & X \left((X^{q+1})^{q-1}-1\right)\\  
 & \equiv & X \left( (Y^q+Y)^{q-1}-1 \right) \pmod{X^{q+1}-Y^q-Y}\\
 & \equiv & X \left( (Y^q+Y)^{q-1}-1 \right)+X^{q+1}-Y^q-Y \pmod{X^{q+1}-Y^q-Y}.
\end{eqnarray*}
Since the polynomial $X^{q^2}-X$ vanishes at all the $\fqs$-rational points in the affine plane, in particular, it vanishes at all the $\fqs$-rational affine points of $\cH_q$. By the above congruence, we see that the polynomial $X \left( (Y^q+Y)^{q-1}-1 \right)+X^{q+1}-Y^q-Y \pmod{X^{q+1}-Y^q-Y}$ also vanishes at all the $\fqs$-rational affine points on the Hermitian curve. Homogenizing this polynomial, we obtain exactly the defining equation of the curve $\cD_{q^2-q+1}$. Observing that $[0:1:0]$ is also on $\cD_{q^2-q+1}$, we conclude that $\cD_{q^2-q+1}$ contains all $q^3+1$ distinct $\fqs$-rational points of $\cH_q$. Thus $\cD_{q^2-q+1}$ and $\cH_q$ intersect in exactly $q^3+1$ distinct $\fqs$-rational points. It is clear that $\cD_{q^2-q+1}$ is a curve of degree $q^2-q+1$.

We now prove that $\cD_{q^2-q+1}$ is an absolutely irreducible curve by showing that the polynomial $X \left( (Y^q+Y)^{q-1}-1 \right)+X^{q+1}-Y^q-Y$ is absolutely irreducible. Define $T=Y^q+Y$. First, we claim that the polynomial $X(T^{q-1}-1)+X^{q+1}-T$ is absolutely irreducible. Suppose that $x,t$ are transcendental elements over $\fqs$ satisfying $x(t^{q-1}-1)+x^{q+1}-t=0$. Then writing $\overline{t}=x/t$ and $\overline{x}=1/x$, we have
$$(1-\overline{x})^q\overline{t}^{q-1}-\overline{x}^q \overline{t}^{q-2}+\overline{x}=0.$$ Hence by Eisenstein's criterion, the zero of $\overline{x}$ is totally ramified in the function field extension $\fqs(x,t)/\fqs(x)=\fqs(\overline{x},\overline{t})/\fqs(\overline{x})$ with ramification index $q-1$. Thus the polynomial $X(T^{q-1}-1)+X^{q+1}-T$ is absolutely irreducible. 
As a consequence, the function field extension $$[\fqs(x,t) : \fqs(t)] = q+1 \ \ \text{and} \ \ [\fqs(x,t) : \fqs(x)] = q-1.$$
Now let $y$ satisfy $y^q+y=t$. Observe that $\fqs(x,y)$ is the composite of the function fields $\fqs(t,y)$ and $\fqs(t,x)$ over $\fqs(t)$. Furthermore, the extension $\fqs(t,y)/\fqs(t)$ is an Artin-Schreier extension in which the pole of $t$ is totally ramified (see \cite[Proposition 3.7.10]{Stichtenoth}). In particular, $[\fqs(t,y) : \fqs(t)] = q$. Since the extension degrees $[\fqs(t,y):\fqs(t)]=q$ and $[\fqs(t,x):\fqs(t)]=q+1$ are relatively prime, we see that $[\fqs(t, y) : \fqs(t)] = q(q+1)$. Consequently, $[\fqs(x,y):\fqs(x,t)]=q$, which implies  $$[\fqs(x,y):\fqs(x)]=[\fqs(x,y):\fqs(x,t)]\cdot [\fqs(x,t):\fqs(x)]=q(q-1).$$ Thus the polynomial $X \left( (Y^q+Y)^{q-1}-1 \right)+X^{q+1}-Y^q-Y$ is absolutely irreducible.
\end{proof}

We now show that in the case when $d=q$, Question 1.1 has a positive answer. Similar to the case when $d=q^2-q+1$, the main idea is to start by considering affine $\fqs$-rational points on the Hermitian curve $\cH_q$, given by the equation $Y^qZ+YZ^q=X^{q+1}$.

\begin{theorem}\label{thm:degq}
For $q > 2$ and $\alpha \in \fqs \setminus \fq$, the curve $\cC_q$ of degree $q$ given by the equation $$Y^q+YZ^{q-1}=(\alpha+\alpha^2)X^q-\alpha^3X^{q-1}Z+X^2Z^{q-2}-(\alpha+\alpha^2)XZ^{q-1}-\alpha^3Z^q$$ is absolutely irreducible and it intersects $\cH_q$ in $q(q+1)$ distinct $\fqs$-rational points.
\end{theorem}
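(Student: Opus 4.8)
The plan is to reduce the whole problem to a one–variable question. Dehomogenising at $Z=1$, the Hermitian curve becomes $y^q+y=x^{q+1}$ and $\cC_q$ becomes $y^q+y=g(x)$ with
$$g(x)=(\alpha+\alpha^2)x^q-\alpha^3x^{q-1}+x^2-(\alpha+\alpha^2)x+\alpha^3.$$
First I would dispose of the points at infinity: setting $Z=0$ one sees that $\cH_q$ meets the line at infinity only in $[0:1:0]$, while the points at infinity of $\cC_q$ satisfy $Y^q=(\alpha+\alpha^2)X^q$, a relation that $[0:1:0]$ does not satisfy. Hence $\cH_q$ and $\cC_q$ share no point at infinity, and since they have no common component, B\'ezout forces all $q(q+1)$ intersection points to lie in the affine plane.

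A common affine point $(x,y)$ must satisfy $x^{q+1}=g(x)$, i.e. $f(x)=0$ where $f(X):=X^{q+1}-g(X)$ has degree $q+1$. Conversely, if $x\in\fqs$ is a root of $f$ then $x^{q+1}=g(x)\in\fq$, and since $y\mapsto y^q+y$ is the trace map $\fqs\to\fq$, which is surjective with fibres of size $q$, each such $x$ produces exactly $q$ values $y\in\fqs$, all giving genuine common points. Thus the number of $\fqs$-rational intersection points equals $q\cdot\#\{x\in\fqs:f(x)=0\}$, and the theorem reduces to showing that $f$ has exactly $q+1$ distinct roots, all lying in $\fqs$.

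The coefficients of $g$ are arranged precisely so that $f$ factors through the quadratic $X^2-(\alpha+\alpha^2)X+\alpha^3=(X-\alpha)(X-\alpha^2)$; concretely, expanding $(X^{q-1}-1)\bigl(X^2-(\alpha+\alpha^2)X+\alpha^3\bigr)$ and comparing with $X^{q+1}-g(X)$ yields the factorisation
$$f(X)=(X^{q-1}-1)(X-\alpha)(X-\alpha^2).$$
The roots of $f$ are therefore $\fq^{*}\cup\{\alpha,\alpha^2\}$. These are $q+1$ elements of $\fqs$, and they are pairwise distinct: $\alpha\notin\fq$ forces $\alpha\notin\fq^{*}$ and $\alpha\neq\alpha^2$, the only point needing care being that $\alpha^2\notin\fq$ as well (this is exactly where the precise choice of $\alpha$ enters, and where the characteristic $2$ situation should be checked separately). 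Granting this, $f$ has $q+1$ distinct roots in $\fqs$, producing $q(q+1)$ distinct $\fqs$-rational common points; since this already exhausts the B\'ezout bound, every intersection is automatically transversal.

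It remains to prove that $\cC_q$ is absolutely irreducible, and this I regard as the main obstacle. I would work with the function field $\fqs(x,y)$ defined by $y^q+y=g(x)$ over $\fqs(x)$. Because $\fqs$ is perfect and $\alpha+\alpha^2=\alpha(1+\alpha)\neq0$, the element $\gamma:=(\alpha+\alpha^2)^{1/q}$ lies in $\fqs$; replacing $y$ by $y-\gamma x$ turns the equation into $y_1^q+y_1=\tilde g(x)$, where $\tilde g(x)=g(x)-(\alpha+\alpha^2)x^q-\gamma x$ now has a pole of order exactly $q-1$ at $x=\infty$ (the surviving leading term being $-\alpha^3x^{q-1}$; for $q=3$ one checks that the coefficient $1-\alpha^3$ is nonzero). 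Since $\gcd(q-1,p)=1$, the place at infinity is totally ramified in $\fqs(x,y)/\fqs(x)$ by the theory of Artin--Schreier extensions, as in \cite[Proposition 3.7.10]{Stichtenoth}. A totally ramified place forces the extension degree to equal $q$, so the defining polynomial is irreducible; moreover a ramified place cannot arise from a constant field extension, so $\fqs$ is algebraically closed in $\fqs(x,y)$ and the irreducibility is geometric. Hence $\cC_q$ is absolutely irreducible. The two delicate points are the pole-order computation after the substitution (so that the ramification index is genuinely $q$) and the passage from irreducibility over $\fqs$ to \emph{absolute} irreducibility, which is precisely what total ramification secures.
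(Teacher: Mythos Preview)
Your proof follows essentially the same route as the paper's: both factor $X^{q+1}-g(X)=(X^{q-1}-1)(X-\alpha)(X-\alpha^2)$, count $q$ fibres of the trace map over each root, and then kill the $x^q$-term by the substitution $y\mapsto y-(\alpha+\alpha^2)^{q}x$ (your $\gamma=(\alpha+\alpha^2)^{1/q}$ equals $(\alpha+\alpha^2)^{q}$ in $\fqs$) to reduce to an Artin--Schreier extension with pole order $q-1$ at infinity. You are in fact more careful than the paper on several points: you check the points at infinity explicitly, you verify the leading coefficient $1-\alpha^3\neq 0$ when $q=3$, and you spell out why total ramification upgrades $\fqs$-irreducibility to absolute irreducibility.

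The concern you raise about $\alpha^2\notin\fq$ is a genuine gap, and the paper's proof does not address it either. For odd $q$, any square root $\alpha\in\fqs\setminus\fq$ of a non-square in $\fq^{*}$ has $\alpha^2\in\fq^{*}$, so $f$ acquires a repeated root and the curves meet in only $q^2$ distinct $\fqs$-rational points. Thus the statement as written needs the extra hypothesis $\alpha^2\notin\fq$ (which is automatic for even $q$); under that proviso your argument and the paper's coincide.
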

\begin{proof}
Let $\alpha_1,\dots,\alpha_{q+1} \in \fqs$ be distinct. Observe that $\cH_q$ and the reducible curve with affine equation $f(X):=\prod_{i=1}^{q+1} (X-\alpha_i)$ intersect in $q(q+1)$ affine $\fqs$-rational points, since the lines with equation $X=\alpha_i Z$ are all secant lines through the point $(0:1:0) \in \cH_q$. Now observe that $f(X)$ can be written in the form $f(X)=X^{q+1}-g(X)$, where $\deg g(X) \ \le q$. The projective curve $\cC_q$ given by the equation $Y^q+YZ^{q-1}=Z^qg(X/Z)$ therefore intersects $\cH_q$ in at least $q(q+1)$ distinct affine $\fqs$-rational points. Using B\'ezout's theorem, we conclude that $\cC_q$ and $\cH_q$ intersect in exactly $q(q+1)$ rational points. Now we choose specifically $f(X)=(X^{q-1}-1)(X-\alpha)(X-\alpha^2)$ with $\alpha \in \fqs \setminus \fq$. Then $g(X)=(\alpha+\alpha^2)X^q-\alpha^3X^{q-1}+X^2-(\alpha+\alpha^2)X-w^3.$ We claim that for $q>2$ and this choice of $g(X)$, the polynomial $Y^q+YZ^{q-1}-Z^qg(X/Z)$ is absolutely irreducible. First, observe that the extension $\fqs(x,y)/\fqs(x)$ defined by $y^q+y=g(x)$ is an Artin-Schreier extension. Introducing the variable $z=y+(\alpha+\alpha^2)^q x$, we see that $\fqs(x,y)=\fqs(x,z)$ and $z^q+z=h(x)$, where $\deg h(x)=q-1$. Hence by the theory of Artin-Schreier extensions, the pole of $x$ is totally ramified in the extension $\fqs(x,z)/\fqs(x)$. This implies that the polynomial $Y^q+Y-g(X)$ is absolutely irreducible and so is its homogenization $Y^q+YZ^{q-1}-Z^qg(X/Z)$.   
\end{proof}

\subsection{Degree $d = \lfloor \frac{q+1}{2} \rfloor$}\label{sec:degree_q_halve}
As the proof of Theorem \ref{thm:degq} shows, the curve given by the equation $$\displaystyle{Y^q+YZ^{q-1}=\frac{\prod_{i=1}^{q+1} (X-\alpha_iZ)-X^{q+1}}{Z}}$$ intersects $\cH_q$ in $q(q+1)$ distinct $\fqs$-rational points. However, this curve need not be irreducible for all possible choices of  $\{\alpha_1, \dots, \alpha_{q+1}\}$. We now use this to our advantage to get the first result of this section.

\begin{corollary}\label{cor:q/2}
Suppose $q$ is even and let $\alpha\in \fqs$ be an element satisfying  $\alpha^q+\alpha=1$. Then the curve $\cC_{q/2}$ with equation $$(Y+\alpha^qX)^{q/2}+\cdots+(Y+\alpha^qX)^2Z^{q/2-2}+(Y+\alpha^qX)Z^{q/2-1}=XZ^{q/2-1}$$ is absolutely irreducible, and it intersects the Hermitian curve $\cH_q$ in $q(q+1)/2$ distinct $\fqs$-rational points.
\end{corollary}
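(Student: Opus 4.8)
The plan is to obtain $\cC_{q/2}$ as an absolutely irreducible component of the degree-$q$ curve constructed in Theorem \ref{thm:degq}, for a carefully chosen configuration of secant lines. Since $q$ is even we work in characteristic $2$. First note that the $\fq$-linear trace $\beta\mapsto\beta^q+\beta$ maps $\fqs$ onto $\fq$, so an element $\alpha$ with $\alpha^q+\alpha=1$ exists, and $\alpha\notin\fq$ because $\beta^q+\beta=0$ for every $\beta\in\fq$. I would then run the construction of Theorem \ref{thm:degq} with the $q+1$ distinct abscissae $\fq\cup\{\alpha\}$, that is, with $f(X)=(X+\alpha)\prod_{c\in\fq}(X-c)=(X+\alpha)(X^q+X)$. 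Writing $f(X)=X^{q+1}-g(X)$ gives $g(X)=\alpha X^q+X^2+\alpha X$, and the construction preceding this corollary guarantees that the degree-$q$ curve $\cC_q\colon y^q+y=\alpha x^q+x^2+\alpha x$ meets $\cH_q$ in exactly $q(q+1)$ distinct $\fqs$-rational points.

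The decisive step is to factor $\cC_q$. Introducing $w=y+\alpha^qx$ and using $\alpha^{q^2}=\alpha$, one finds $w^q+w=(y^q+y)+\alpha x^q+\alpha^qx$; substituting the equation of $\cC_q$ and using $\alpha+\alpha^q=1$ collapses the defining relation to
\[
w^q+w=x^2+x .
\]
Writing $q=2^m$ and letting $L(w)=\sum_{i=0}^{m-1}w^{2^i}$ be the half-trace, which satisfies $L(w)^2+L(w)=w^q+w$, the relation factors as $(L(w)+x)(L(w)+x+1)=0$. Hence $\cC_q$ is the union of the two degree-$q/2$ curves $\{L(w)=x\}$ and $\{L(w)=x+1\}$. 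Homogenising $L(y+\alpha^qx)=x$ reproduces precisely the equation in the statement, so the corollary's curve is the component $\cC_{q/2}\colon L(y+\alpha^qx)=x$.

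For the intersection count I would avoid any delicate equidistribution argument and use Bézout twice. The two components are disjoint, since $L(w)=x$ and $L(w)=x+1$ cannot hold simultaneously, so the $q(q+1)$ rational points of $\cC_q\cap\cH_q$ are partitioned between them. Each component has degree $q/2<q+1$ and therefore shares no component with the absolutely irreducible $\cH_q$, so Bézout yields $|\cC_{q/2}\cap\cH_q|\le\frac q2(q+1)$ and the same bound for $\{L(w)=x+1\}$. As the two counts add up to $q(q+1)$, both bounds are attained; in particular $\cC_{q/2}$ meets $\cH_q$ in exactly $q(q+1)/2$ distinct $\fqs$-rational points.

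It remains to prove absolute irreducibility, which becomes transparent in the coordinates $(x,w)$ obtained from $(x,y)$ by the invertible linear substitution $w=y+\alpha^qx$: there the defining polynomial of $\cC_{q/2}$ is $L(w)+x=x\cdot 1+L(w)$, a primitive polynomial of degree $1$ in $x$ over $\fqso[w]$, hence irreducible over $\fqso$; consequently the original polynomial is absolutely irreducible as well. The only genuinely creative step is the opening one, namely recognising that choosing the secant abscissae to be $\fq\cup\{\alpha\}$ together with the substitution $w=y+\alpha^qx$ cancels the $\alpha x^q$ term and reduces $\cC_q$ to $w^q+w=x^2+x$, putting the characteristic-$2$ half-trace identity into play; once this is found, the factorisation, the Bézout count and the irreducibility check are all routine.
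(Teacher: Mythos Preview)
Your proof is correct and follows essentially the same route as the paper's: the same choice $f(X)=(X^q+X)(X+\alpha)$, the same substitution $w=y+\alpha^q x$ reducing $\cC_q$ to $w^q+w=x^2+x$, the same half-trace factorisation, the same degree-one-in-$x$ irreducibility check, and the same B\'ezout argument on the two factors. The only slight imprecision is your claim that the two components are disjoint: this holds only in the affine chart (projectively they meet at $[1:\alpha^q:0]$), but since all $q(q+1)$ intersection points of $\cC_q$ with $\cH_q$ are affine by construction, your partition argument is unaffected.
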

\begin{proof}
First, consider the curve with affine equation $$Y^q+Y=X^{q+1}-(X^q-X)(X+\alpha).$$ As shown in Theorem \ref{thm:degq}, it intersects the Hermitian curve $\cH_q$ in exactly $q(q+1)$ distinct $\fqs$-rational points. Upon simplification, the equation of the curve becomes $Y^q+Y=\alpha X^q+X^2+\alpha X.$ Using the equalities $\alpha^{q^2}=\alpha$ and $\alpha^q+\alpha=1$, we have
\[(Y+\alpha^qX)^q+(Y+\alpha^qX)=X^2+(\alpha^q+\alpha)X=X^2+X.\]
Rewriting the right hand side of the above equation, we get
\[(Y+\alpha^qX)^q+(Y+\alpha^qX)=A^2+A,\] where 
\[A=(Y+\alpha^qX)^{q/2}+\cdots+(Y+\alpha^qX)^2+(Y+\alpha^qX).\]
Putting $\tilde{Y}=Y+\alpha^qX$, the defining polynomial of the curve becomes
$$\tilde{Y}^q+\tilde{Y}+X^2+X=(\tilde{Y}^{q/2}+\cdots+\tilde{Y}^2+\tilde{Y}+X)\cdot (\tilde{Y}^{q/2}+\cdots+\tilde{Y}^2+\tilde{Y}+X+1).$$ 
Since the $X$-degree of the factors is one, the resulting curves are absolutely irreducible. Moreoever, each of the curves intersects $\cH_q$ in $q(q+1)/2$ distinct $\fqs$-rational points. Indeed, B\'ezout's theorem implies that they cannot intersect $\cH_q$ in more points, while together they intersect $\cH_q$ in $q(q+1)$ distinct $\fqs$-rational points. 
\end{proof}

In Corollary \ref{cor:q/2}, we have seen that for even $q$ there exists a curve of degree $d=q/2$ intersecting $\cH_q$ in $d(q+1)$ rational points.  In this section, we show that if $q$ is odd, then there are (nonsingular) plane curves of degree $d = \frac{q+1}{2}$ that intersect the Hermitian curve at exactly $d (q+1)$ points. Hence both for $q$ even and odd, there exists curves of degree $d=\lfloor (q+1)/2 \rfloor$ with the desired property of intersecting $\cH_q$ in $d(q+1)$ rational points. In the following, we will assume that $\cH_q$ is given by the equation $X^{q+1} + Y^{q+1} + Z^{q+1} = 0$.

For $\alpha, \beta \in \fq$ let $C_{\alpha, \beta}$ be the curve defined by equation 
$$\alpha X^{\frac{q+1}{2}} + Y^{\frac{q+1}{2}} + \beta Z^{\frac{q+1}{2}} = 0.$$
Note that if $\alpha\beta \neq 0$ then $C_{\alpha,\beta}$ is an absolutely irreducible curve. It is evident that on the line at infinity, i.e. when $Z = 0$, the curve $C_{\alpha, \beta}$ and the Hermitian curve $\cH_q$ do not intersect. Thus, we restrict our attention to the intersection of $\cH_q$ and $C_{\alpha, \beta}$ in the affine plane given by $Z = 1$. Thus, we look at the following system of two polynomial equations:

$$X^{q+1} + Y^{q+1} + 1 = 0 \ \ \ \text{and} \ \ \ \ \alpha X^{\frac{q+1}{2}} + Y^{\frac{q+1}{2}} + \beta =0.$$
By simply eliminating $Y$, we have 
\begin{equation}\label{half:one}
    (\alpha^2 + 1) X^{q+1} + 2\alpha \beta X^{\frac{q+1}{2}} + (\beta^2 + 1) = 0.
\end{equation}

We claim that there exist $\alpha, \beta \in \fq \setminus \{0\}$, such that the equation \eqref{half:one}, considered as a quadratic expression in $X^{\frac{q+1}{2}}$, has exactly two distinct solutions in $\fq \setminus \{0\}$. Note that, this is possible if and only if the following conditions are satisfied simultaneously:
\begin{enumerate}
    \item[(a)] $\alpha \beta \neq 0$ and $(\alpha^2+1)(\beta^2+ 1) \neq 0$.
    \item[(b)] there exists $\gamma \in \fq\setminus \{0\}$ such that $\gamma^2 + \alpha^2 + \beta^2 + 1 = 0$. 
\end{enumerate}
For a polynomial $f \in \fq[\alpha,\beta,\gamma]$ we denote by $V(f)$ the set of zeros of $f$ in $\AA^3 := \overline{\mathbb{F}}_q^3$. Further, we denote by $V(f)(\fq)$ the set of $\fq$-rational points in $V(f)$. Now the conditions (a) and (b) above are satisfied if and only if the set $$S = V(\gamma^2 + \alpha^2 + \beta^2 + 1) \setminus V(\gamma^2 + \alpha^2 + \beta^2 + 1, \alpha \beta(\alpha^2+1)(\beta^2+ 1)\gamma)$$ has at least one $\fq$-rational point in $\AA^3$. 
It follows from the classification of nonsingular conics in $\mathbb{P}^3$ over finite fields, that $|V(\gamma^2 + \alpha^2 + \beta^2 + 1)(\fq)| = (q+1)^2 - (q+1)$. On the other hand, the affine Bezout's theorem (Lachaud-Rolland) implies that $|V(\gamma^2 + \alpha^2 + \beta^2 + 1, \alpha \beta(\alpha^2+1)(\beta^2+ 1)\gamma)(\fq)| \le 14q$. Combining everything together, we see that when $q > 13$, the set $S$ has at least one $\fq$-rational point. In particular, there exists a pair $(\alpha, \beta) \in \fq \times \fq$, such that the equation \eqref{half:one}, has two distinct solutions in $\fq$ for $X^{\frac{q+1}{2}}$, say $c_1$ and $c_2$. For each of the $c_i$-s, we have $\frac{q+1}{2}$ solutions for $X$ in $\fqs$. It remains to show that for each of the $q+1$ solutions for $X$ thus obtained, we have $\frac{q+1}{2}$ many solutions for $Y$ in $\fqs$. We fix a solution $X_0 \in \fqs$ for $X$. By assumption then $X_0^{\frac{q+1}{2}} \in \fq$ and we have 
$$Y^{\frac{q+1}{2}} = -\beta - \alpha X_0^{\frac{q+1}{2}}.$$
Since the right hand side of the above equation belongs to $\fq$, it follows that the equation has exactly $\frac{q+1}{2}$ solutions in $Y$ in $\fqs$, completing the proof.

\section{Results for small $d$.}\label{sec:degree_small}

In this section, we will restrict our attention to a special family of curves and their intersections with Hermitian curves. To this end, for each $\a \in \fqs \setminus \{0\}$, we define the plane projective curve $\cC_d$  given by the equation $XZ^{d-1} = \alpha Y^d$. Our main goal in this section will be to determine all the values of $\a$ such that $\cC_d$ intersects the Hermitian curve $\cH_q$, given by the equation $X^{q+1} + Y^{q+1} + Z^{q+1} = 0$ at exactly $d(q+1)$ many distinct $\fqs$-ratinal points. We remark that using the current model of the Hermitian curves has several advantages. First of all, it is evident that $\cC_d$ and $\cH_q$ do not admit a point of intersection at the line at infinity given by the equation $Z=0$. This phenomenon reduces our study to find out the values of $\a$ such that the following system of equations
\begin{equation}\label{sys1}
    X^{q+1} + Y^{q+1} + 1 = 0 \ \ \ \ \text{and} \ \ \ \ X = \a Y^d
\end{equation}
has exactly $d(q+1)$ distinct zeroes in $\AA^2 (\fqs)$. Upon substituting $X$ by $\a Y^d$ in the first equation, we see that the system of equations \eqref{sys1} has $d(q+1)$ distinct solutions in $\AA^2 (\fq)$ if and only if the polynomial 
$$f(t) = \a^{q+1} t^{d(q+1)} + t^{q+1} + 1$$
has exactly $d(q+1)$ many distinct roots in $\fqs$. For the brevity of terminology, let us recall the following definition. 

\begin{definition}\label{def:split}\normalfont
Let $\mathbb{K}/\mathbb{F}$ be a field extension and $f(t) \in \mathbb{F}[t]$, with $n = \deg f \ge 1$. We say that \emph{$f(t)$ splits over $\mathbb{K}$} if it has $n$ distinct roots in $\mathbb{K}$. If $\mathbb{K}$ is the smallest field extension of $\mathbb{F}$ such that $f$ splits over $\mathbb{K}$, then we say that $\mathbb{K}$ is the \emph{splitting field of $f$}.
\end{definition}

Thus, we have shown that $|\cH_q \cap \cC_d (\fqs)| = d(q+1)$ if and only if $f(t)$ splits over $\fqs$. The following Lemma further reduces our study to a problem concerning the splitting of certain one-variable polynomials over $\fq$. 

\begin{lemma}\label{lem:split1}
For $\alpha \in \fqs \setminus \{0\}$, the Hermitian curve $\cH_q$ and the curve $\cC_d$ intersect in $d(q+1)$ distinct $\fqs$-rational points if and only if  $\alpha^{q+1}t^d+t+1 \in \fq[t]$ splits over $\fq$. 
\end{lemma}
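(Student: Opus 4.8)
The plan is to build directly on the reduction established just before the statement: we have already shown that $|\cH_q \cap \cC_d(\fqs)| = d(q+1)$ if and only if the polynomial $f(t) = \alpha^{q+1} t^{d(q+1)} + t^{q+1} + 1$ splits over $\fqs$. So it suffices to prove that $f$ splits over $\fqs$ if and only if $g(t) := \alpha^{q+1} t^d + t + 1$ splits over $\fq$. First I would record two elementary facts. Since $\alpha^{q+1} = N_{\fqs/\fq}(\alpha) = \alpha \cdot \alpha^q \in \fq$ and $\alpha \neq 0$, the polynomial $g$ indeed has coefficients in $\fq$ and degree exactly $d$, while $\deg f = d(q+1)$. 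The crucial observation is the identity $f(t) = g(t^{q+1})$, which is immediate upon substituting $s = t^{q+1}$.

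The heart of the argument is to compare the roots of $f$ and $g$ through the norm map $N \colon \fqs^* \to \fq^*$, $N(t) = t^{q+1}$. This is a surjective group homomorphism whose every fiber has exactly $q+1$ elements, since its kernel has order $|\fqs^*|/|\fq^*| = (q^2-1)/(q-1) = q+1$. I would first note that neither $f$ nor $g$ vanishes at $0$, as $f(0) = g(0) = 1$; hence every root under consideration is nonzero. For $t \in \fqs^*$ the identity $f(t) = g(t^{q+1})$ shows that $t$ is a root of $f$ if and only if $s = N(t) = t^{q+1}$ is a root of $g$, and every such $s$ automatically lies in $\fq^*$. Conversely, each root $s \in \fq^*$ of $g$ has exactly $q+1$ preimages in $\fqs^*$ under $N$, all of which are roots of $f$, and preimages of distinct roots are distinct. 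Therefore the number of distinct roots of $f$ in $\fqs$ equals $(q+1)$ times the number of distinct roots of $g$ in $\fq$.

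With this count in hand, the equivalence is immediate. Since $\deg f = d(q+1)$, the polynomial $f$ splits over $\fqs$ precisely when it has $d(q+1)$ distinct roots there, which by the count above happens precisely when $g$ has $d$ distinct roots in $\fq$; as $\deg g = d$, this is exactly the statement that $g$ splits over $\fq$. I do not expect a genuine obstacle here: the only point that requires care is the claim that the norm map is exactly $(q+1)$-to-one onto $\fq^*$, together with the harmless check that $t=0$ contributes no root and that roots of $g$ cannot leave $\fq^*$. It is precisely this uniform fibre behaviour that converts the condition ``$g$ has $d$ distinct roots'' into ``$f$ has $d(q+1)$ distinct roots'' with no loss or collision of roots, and hence keeps the equivalence tight in both directions.
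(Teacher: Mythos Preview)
Your proof is correct and follows essentially the same route as the paper: both arguments reduce to the identity $f(t)=g(t^{q+1})$ and use that the norm map $t\mapsto t^{q+1}$ from $\fqs^*$ onto $\fq^*$ is exactly $(q+1)$-to-one, together with $g(0)=1\neq 0$. Your version is in fact a bit more explicit about the fibre count in both directions than the paper's own proof.
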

\begin{proof}
As seen above, the curves $\cH_q$ and $\cC_d$ intersect in $d(q+1)$ distinct $\fqs$-rational points if and only if the polynomial $f(t) = \alpha^{q+1}t^{d(q+1)}+t^{q+1}+1 \in \fq[t]$ splits over $\fqs$. Note that, if an element $\rho \in \fqs$ is a root of $f(t)$, then $\rho^{q+1} \in \fq$ is a root of $g(t)=\alpha^{q+1}t^d+t+1$. This shows that if $f(t)$ has $d(q+1)$ distinct roots in $\fqs$, then $g(t)$ must have $q+1$ distinct roots in $\fq$. This establishes the only if part of the assertion. Conversely, 
if $\sigma \in \fq$ is a root of $\alpha^{q+1}t^d+t+1$, then each of the $q+1$ roots of the polynomial $X^{q+1} - \sigma$  in $\fqs$ is a root of $\alpha^{q+1}t^{d(q+1)}+t^{q+1}+1$. Consequently, if $g(t)$ splits over $\fq$, that is $g(t)$ has $d$ distinct roots in $\fq$, then $f(t)$ must have $d(q+1)$ roots in $\fqs$. This completes the proof. 
\end{proof}

Thus, we are left with the problem of determining the number of $A=\alpha^{q+1} \in \fq \setminus \{0\}$ such that the polynomial $At^d+t+1$ splits. We aim to determine this number using Galois theory. 

\subsection{Galois theory for the polynomial $At^d+t+1$}
For now, let us consider $A$ as a transcendental element over $\fq$ and
let us denote by $F_d$ the splitting field of $At^d+t+1$ over $\fq(A)$. We begin our investigations on $F_d$ using the following Lemma. 

\begin{lemma}\label{lem:rho_ext}
Let $T$ be a root of the polynomial $At^d+t+1$ in an algebraic closure of the function field $\fq(A)$. Let $P_0$ and $P_\infty$ denote the zero  and the pole  of $A$ respectively. We have,  
\begin{enumerate}
    \item[(a)] $\fq(A,T)=\fq(T)$.
    \item[(b)] Both $P_0$ and $P_\infty$ are  ramified in the extension $\fq(T)/\fq(A)$. In particular, the place $P_\infty$ is totally ramified in the extension $\fq(T)/\fq(A)$, while two places lie above $P_0$, one with ramification index one and the other with ramification index $d-1$.  Moreover, 
    \begin{enumerate}
        \item[(i)] if $\gcd(q,(d-1)d) \neq 1$, then no more places ramify in the extension $\fq(T)/\fq(A)$, 
        \item[(ii)] if $\gcd(q,(d-1)d) = 1$, then exactly one more place $P$ of $\fq(A)$ is ramified in $\fq(T)/\fq(A)$, namely the place $P$ corresponding to the polynomial $A-(-1)^d(d-1)^{d-1}/d^d.$ In this case, there is one place of $\fq(T)$ that lies over $P$ with ramification index two.
    \end{enumerate}
    \end{enumerate}
\end{lemma}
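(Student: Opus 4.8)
The plan is to translate the problem into the geometry of the degree-$d$ covering $\fq(T)/\fq(A)$ and to read off \emph{all} ramification from an explicit rational expression for $A$ in terms of $T$. For part (a), since $T$ is a root of $At^d+t+1$ we have $AT^d+T+1=0$, and solving for $A$ gives
$$A=-\frac{T+1}{T^d}\in\fq(T),$$
so $A\in\fq(T)$ and therefore $\fq(A,T)=\fq(T)$. Writing $A=-(T+1)/T^d$ as a reduced quotient (numerator $-(T+1)$ and denominator $T^d$ are coprime), the rational map $T\mapsto A$ has degree $\max(1,d)=d$, whence $[\fq(T):\fq(A)]=d$ and $At^d+t+1$ is irreducible over $\fq(A)$. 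A direct look at $dA/dT=-T^{-d-1}\big((1-d)T-d\big)$ shows it cannot vanish identically (that would need $p\mid d$ and $p\mid d-1$ simultaneously), so the extension is separable and Riemann--Hurwitz will be available.

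For the ramification over $P_0$ and $P_\infty$ in part (b), I would simply compute the divisor of $A=-(T+1)/T^d$ on the rational function field $\fq(T)$. The only pole of $A$ is at $T=0$, of order $d$, so $T=0$ is the unique place above $P_\infty$ and $P_\infty$ is totally ramified with index $d$. The zeros of $A$ are at $T=-1$ (simple) and at $T=\infty$, where $A\sim -T^{1-d}$ has a zero of order $d-1$; hence exactly two places lie above $P_0$, with ramification indices $1$ and $d-1$. All of this is a computation with orders of zeros and poles and is independent of the characteristic, which is why this portion of the statement holds unconditionally.

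To locate any further ramification, I would observe that for a finite place $A=c$ with $c\neq0$ the points above it are the roots of $cT^d+T+1$, and such a point $t_0$ is ramified exactly when it is a \emph{multiple} root, i.e. $ct_0^d+t_0+1=0$ and $cd\,t_0^{d-1}+1=0$. Eliminating $c$ between these forces $(d-1)t_0=-d$, so the only possible extra ramification point is $t_0=d/(1-d)$. This is a genuine finite, nonzero value of $T$ precisely when $1-d\neq0$ and $d\neq0$ in $\fq$, that is, when $\gcd(q,(d-1)d)=1$; when $p\mid(d-1)d$ the system has no admissible solution, and combined with the previous step this proves (i), that no place other than $P_0,P_\infty$ ramifies. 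In case (ii) one substitutes $t_0=d/(1-d)$ into $A=-(T+1)/T^d$, using $t_0+1=1/(1-d)$, to obtain the base value $A(t_0)=(-1)^d(d-1)^{d-1}/d^d$, i.e. the place $P$ attached to $A-(-1)^d(d-1)^{d-1}/d^d$.

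The one delicate point, and the main obstacle, is to show that the index above $P$ is \emph{exactly} $2$ rather than larger. Rather than a multiplicity computation in characteristic $p$, I would invoke Riemann--Hurwitz for the separable degree-$d$ cover $\mathbb{P}^1_T\to\mathbb{P}^1_A$: both curves have genus $0$, so the different has degree $2d-2$. In case (ii) the place over $P_\infty$ (index $d$, tame since $p\nmid d$) and the two places over $P_0$ (indices $d-1$, tame since $p\nmid d-1$, and $1$) contribute different exponents $(d-1)+(d-2)+0=2d-3$. The remaining different of degree $1$ must therefore sit at the unique remaining ramified place $t_0$; a different exponent equal to $1$ is incompatible both with wild ramification (which forces exponent $\geq e$) and with index $e\geq3$ (which forces exponent $\geq2$), so the ramification at $P$ is tame of index exactly $2$. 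Note that case (ii) automatically forces $p$ odd, since $(d-1)d$ is always even. The elimination producing $t_0$ and the divisor bookkeeping are routine; pinning the index at $2$ via the exact value of the different is the step that requires care.
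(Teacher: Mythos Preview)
Your proof is correct and, for parts (a), the ramification above $P_0$ and $P_\infty$, and (b)(i), proceeds exactly as the paper does: write $A=-(T+1)/T^d$, read off the divisor of $A$, and use the common-root criterion with the derivative $dAt^{d-1}+1$ to locate any further ramification.

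The one genuine difference is how you pin the index above $P$ in (b)(ii) at exactly $2$. The paper argues directly with the second derivative: since $\gcd(q,d(d-1))=1$, the polynomial $d(d-1)At^{d-2}$ vanishes only at $t=0$, so $t_0=-d/(d-1)\neq 0$ is a root of multiplicity precisely two. You instead run Riemann--Hurwitz on the separable degree-$d$ cover $\mathbb{P}^1_T\to\mathbb{P}^1_A$: the tame contributions above $P_\infty$ and $P_0$ account for $(d-1)+(d-2)=2d-3$ of the total different degree $2d-2$, leaving different exponent exactly $1$ at the unique remaining candidate $t_0$, which forces tame ramification of index $2$ (using that $p$ is odd in case (ii)). Both arguments are short and complete; the paper's is slightly more elementary, while yours has the pleasant feature that it simultaneously \emph{proves} $t_0$ is ramified rather than merely a candidate.
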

\begin{proof}\noindent
\begin{enumerate}
    \item[(a)] Since $A=-(1+T)/T^d \in \fq(T)$, it is clear that $\fq(A,T)=\fq(T)$.
    \item[(b)] The pole of $A$ has only one place lying above it in $\fq(T)$, namely the zero of $T$ with ramification index $d$. The zero of $A$ has exactly two places lying above it, namely the pole of $T$ with ramification index $d-1$ and the zero of $T+1$ with ramification index one. 
If $P$ is any other place of $\fq(A)$ ramified in $\fq(T)/\fq(A)$, then $P$ corresponds to a nonzero value of $A$ such that the polynomial $At^d+t+1$ has a multiple root. In this case, the polynomial $At^d + t + 1$ and its formal derivative $dAt^{d-1}+1$ must have a common root. If $\gcd (q, d) \neq 1$, or equivalently $\char \ \fq \mid d$, then the latter has no roots. Moreover, a common root of $At^d + t + 1$ and $dAt^{d-1}+1$ is also a root of $d(At^d+t+1)-t(dAt^{d-1}+1)=(d-1)t+d$. However, if $\char \ \fq \mid d-1$, then this is impossible. This establishes (i). 
Let us now assume that $\gcd(q,(d-1)d)=1$. Our computations above show that $T=-d/(d-1)$ is the only possible multiple root of $At^d+t+1$, and in this case we have $A=-(1+T)/T^d=(-1)^d (d-1)^{d-1}/d^d$. A direct verification shows that it is in fact a multiple root. It follows that the place $P$ of $\fq(A)$ corresponding to the polynomial $A-(-1)^d(d-1)^{d-1}/d^d$ is the only place that is ramified in the extension $\fq(T)/\fq(A)$, as desired. Since the second derivative of $At^d+t+1$ only has zero as a root, it follows that the multiplicity of the root $T=-d/(d-1)$, and hence the ramification index, is two. 
\end{enumerate}
This completes the proof. 
\end{proof}

We now understand the function field extension obtained by adding one root of the polynomial $At^d+t+1$ to the rational function field $\fq(A)$. Next, we study what happens if we add two roots.

\begin{proposition}\label{prop:splitfield2}
Let $T_1$ and $T_2$ be two distinct roots of the polynomial $At^d+t+1$ in an algebraic closure of the function field $\fq(A)$. Then $\fq(A,T_1,T_2)=\fq(\rho)$, where $\rho=T_2/T_1$. Moreover, 
\begin{equation}\label{eq:T1inrho}
T_1=-\frac{\rho^{d-1}+\cdots + \rho+1}{\rho^{d-1}+\cdots+\rho}=-\frac{\rho^d-1}{\rho^d-\rho},
\end{equation}
\begin{equation}\label{eq:T2inrho}
T_2=T_1\cdot \rho=-\frac{\rho^{d-1}+\cdots + \rho+1}{\rho^{d-2}+\cdots+1}=-\frac{\rho^d-1}{\rho^{d-1}-1},
\end{equation}
and
\begin{equation}\label{eq:Ainrho}
A=-\frac{T_1+1}{T_1^d}=(-1)^d \frac{(\rho-1)(\rho^d-\rho)^{d-1}}{(\rho^d-1)^d}=(-1)^d\frac{\rho^{d-1}(\rho^{d-2}+\cdots + \rho+1)^{d-1}}{(\rho^{d-1}+\cdots + \rho+1)^d}.
\end{equation}
In particular, $\fq$ is the full constant field of $\fq(\rho)$ and $[\fq(\rho):\fq(A)]=d(d-1).$
\end{proposition}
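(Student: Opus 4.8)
The plan is to derive all three displayed identities from the single fact that $T_1$ and $T_2$ are roots of the same polynomial, and then to read off the field-theoretic conclusions from the resulting rational expression for $A$ in terms of $\rho$.

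First I would record that neither root can vanish, since $t=0$ makes $At^d+t+1=1\neq 0$; hence $T_1,T_2\neq 0$, and as the roots are distinct, $\rho=T_2/T_1\neq 1$. From $AT_1^d+T_1+1=0$ and $AT_2^d+T_2+1=0$ I eliminate $A$ to obtain the symmetric relation $(T_1+1)T_2^d=(T_2+1)T_1^d$. Substituting $T_2=\rho T_1$ and cancelling the nonzero factor $T_1^d$ turns this into the \emph{linear} equation $(T_1+1)\rho^d=\rho T_1+1$ for $T_1$, whose solution is exactly \eqref{eq:T1inrho}; here the coefficient $\rho^d-\rho$ is nonzero, because $\rho^{d-1}=1$ would force $\rho=1$ in this linear relation, contradicting $\rho\neq 1$. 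Equation \eqref{eq:T2inrho} is then simply $T_2=\rho T_1$, and \eqref{eq:Ainrho} follows by substituting \eqref{eq:T1inrho} into $A=-(T_1+1)/T_1^d$. The two factored forms in \eqref{eq:Ainrho} are produced by the standard factorisations $\rho^d-1=(\rho-1)(\rho^{d-1}+\cdots+1)$ and $\rho^d-\rho=\rho(\rho-1)(\rho^{d-2}+\cdots+1)$, followed by cancelling the common power of $\rho-1$; I would present these as routine algebra.

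For the field equality, the inclusion $\fq(\rho)\subseteq\fq(A,T_1,T_2)$ is immediate from $\rho=T_2/T_1$, while \eqref{eq:T1inrho}--\eqref{eq:Ainrho} exhibit $A,T_1,T_2$ as rational functions of $\rho$ and hence give the reverse inclusion; thus $\fq(A,T_1,T_2)=\fq(\rho)$. Since $A$ is transcendental over $\fq$ and lies in $\fq(\rho)$, the element $\rho$ cannot be algebraic over $\fq$ (otherwise $\fq(\rho)/\fq$ would be an algebraic extension), so $\rho$ is transcendental and $\fq(\rho)$ is a genuine rational function field, whose full constant field is $\fq$.

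Finally, to compute $[\fq(\rho):\fq(A)]$ I would use that, writing $A=u(\rho)/v(\rho)$ in lowest terms, this degree equals $\max(\deg u,\deg v)$, and take $u=\rho^{d-1}(\rho^{d-2}+\cdots+1)^{d-1}$ and $v=(\rho^{d-1}+\cdots+1)^d$ from the last expression in \eqref{eq:Ainrho}. The crux is to verify that $u$ and $v$ are coprime. The roots of $\rho^{d-2}+\cdots+1$ lie among those of $\rho^{d-1}-1$ and the roots of $\rho^{d-1}+\cdots+1$ among those of $\rho^d-1$; since $\gcd(d-1,d)=1$, the identity $\gcd(\rho^{d-1}-1,\rho^d-1)=\rho-1$ holds over any field, so the only possible common root of $u$ and $v$ is $\rho=1$. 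Evaluating the two cofactors at $\rho=1$ yields $d-1$ and $d$, which cannot vanish simultaneously because $\char \ \fq$ divides at most one of $d-1,d$; and since $\rho=0$ is plainly not a root of $v$, the polynomials $u,v$ share no root and are therefore coprime. Reading off degrees then gives $\max\bigl((d-1)^2,\,d(d-1)\bigr)=d(d-1)$. I expect this characteristic-sensitive coprimality check---ensuring that no unexpected cancellation occurs when $\char \ \fq$ divides $d$ or $d-1$---to be the only genuinely delicate point; everything else is bookkeeping.
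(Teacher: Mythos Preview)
Your proof is correct and follows essentially the same route as the paper: the paper factors $At^d+t+1$ over $\fq(T_1)[t]$ and evaluates the quotient at $t=T_2$ to obtain the same linear relation for $T_1$ in terms of $\rho$, whereas you eliminate $A$ directly from the two root equations---a cosmetic difference. Your explicit coprimality check justifying $[\fq(\rho):\fq(A)]=d(d-1)$ is a detail the paper leaves implicit (it simply asserts that Equation~\eqref{eq:Ainrho} yields the degree), so your version is in fact more complete on that point.
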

\begin{proof}
When viewed as a polynomial in $\fq(T_1)[t]$, we see that
\begin{eqnarray*}
At^d+t+1 & = & A(t^d-T_1^d)+(t-T_1) +1+AT_1^d+T_1 \\
         & = & A(t^d-T_1^d)+(t-T_1)\\
         & = & (t-T_1)\left( A(t^{d-1}+T_1t^{d-2}+\cdots+T_1^{d-2}t+T_1^{d-1})+1\right)\\
         & = & (t-T_1)\cdot \frac{-T_1-1}{T_1} \cdot \left( \left(\frac{t}{T_1}\right)^{d-1}+\cdots \left(\frac{t}{T_1}\right)+1+\frac{-T_1}{T_1+1}\right)\\
         & = & \frac{-T_1-1}{T_1} \cdot (t-T_1)\cdot \left( \left(\frac{t}{T_1}\right)^{d-1}+\cdots \left(\frac{t}{T_1}\right)+\frac{1}{T_1+1}\right).
\end{eqnarray*}
Replacing $t$ by $T_2$ in the above equation, we have
$$\rho^{d-1} + \dots + \rho + \frac{1}{T_1 + 1} = 0,$$
which immediately proves 
Equation \eqref{eq:T1inrho}. Equations \eqref{eq:T2inrho} and \eqref{eq:Ainrho} now follow trivially from \eqref{eq:T1inrho}. Thus $A, T_1, T_2 \in \fq(\rho)$ and consequently $\fq(A,T_1,T_2)=\fq(\rho)$. Thus $\fq$ is the full constant field of $\fq(A,T_1,T_2)$. Finally, Equation \eqref{eq:Ainrho} implies $[\fq(\rho):\fq(A)]=(d-1)d$.
\end{proof}
We now look at the particular case when $d = 3$. The following Corollary is an immediate consequence of Proposition \ref{prop:splitfield2}.
\begin{corollary}\label{cor:F3}
The splitting field $F_3$ of the polynomial $At^3+t+1 \in \fq(A)[t]$ is the rational function field $\fq(\rho).$ In particular, the Galois group of $At^3+t+1$ is isomorphic to the symmetric group $S_3.$
\end{corollary}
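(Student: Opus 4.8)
The plan is to read everything off Proposition \ref{prop:splitfield2} together with the special shape of the cubic. First I would specialize that proposition to $d=3$: choosing two distinct roots $T_1,T_2$ of $At^3+t+1$ in an algebraic closure of $\fq(A)$ and putting $\rho=T_2/T_1$, Proposition \ref{prop:splitfield2} yields $\fq(A,T_1,T_2)=\fq(\rho)$ with $\fq$ as full constant field, and moreover $[\fq(\rho):\fq(A)]=d(d-1)=6$. So after adjoining just two roots we already sit inside the rational function field $\fq(\rho)$.

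The key observation that forces $F_3=\fq(\rho)$, rather than a possibly larger field, is that the cubic $At^3+t+1$ has no $t^2$-term. Denoting its three roots by $T_1,T_2,T_3$, Vieta's formulas give $T_1+T_2+T_3=0$, so the remaining root satisfies $T_3=-(T_1+T_2)\in\fq(A,T_1,T_2)=\fq(\rho)$. Hence the splitting field is $F_3=\fq(A,T_1,T_2,T_3)=\fq(\rho)$, which is a rational function field, as claimed.

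For the statement on the Galois group I would first record that $At^3+t+1$ is separable over $\fq(A)$: its resultant with its derivative is a nonzero polynomial in $A$ (for instance, in characteristic different from $3$ the discriminant is a nonzero multiple of $4+27A$, while in characteristic $3$ the derivative is the constant $1$), and since $A$ is transcendental this resultant is a nonzero element of $\fq(A)$. Thus $F_3/\fq(A)$ is a Galois extension of degree $[\fq(\rho):\fq(A)]=6$. The Galois group acts faithfully on the three roots $\{T_1,T_2,T_3\}$, which gives an embedding $\mathrm{Gal}(F_3/\fq(A))\hookrightarrow S_3$; as both groups have order $6$, this embedding is an isomorphism, so the Galois group is $S_3$.

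I do not expect a genuine obstacle here, since the whole corollary is an immediate consequence of Proposition \ref{prop:splitfield2}. The only point requiring a small separate check is the separability of $At^3+t+1$ over $\fq(A)$, which is needed to ensure that $F_3/\fq(A)$ is Galois and hence that the field degree $6$ equals the order of the Galois group.
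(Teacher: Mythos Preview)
Your proof is correct and follows the same route the paper has in mind: specialize Proposition \ref{prop:splitfield2} to $d=3$, note that the third root of a cubic lies in the field generated by the other two (your Vieta argument), and read off $|\mathrm{Gal}|=6$ from the degree computation. The paper simply declares the corollary an immediate consequence of Proposition \ref{prop:splitfield2} without spelling out these details, so your write-up is a faithful expansion of that one-line justification; one minor quibble is that the discriminant of $At^3+t+1$ is $-A(4+27A)$ rather than a multiple of $4+27A$ alone, but this does not affect the separability conclusion.
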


\subsubsection{The case $\gcd(q,(d-1)d)=1$}

For $d>3$, the Galois group of the polynomial $At^d+t+1 \in \fq(A)[t]$ need not be the symmetric group $S_d$, in general. However, the next result shows that this is often the case. 

\begin{theorem}\label{thm:genus}
If $\gcd(q,(d-1)d)=1$, then the Galois group of  $At^d+t+1 \in \fq(A)[t]$ is isomorphic to the symmetric group $S_d$. Moreover, in this case, the splitting field $F_d$ of $At^d+t+1$ has full constant field $\fq$ and its genus $g_d$ is given by 
$$g_d=1+\frac{d^2-5d+2}{4}(d-2)!.$$ 
\end{theorem}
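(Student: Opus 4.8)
The plan is to identify $G=\mathrm{Gal}(F_d/\fq(A))$ as a transitive permutation group on the $d$ roots of $At^d+t+1$ and to pin it down from the cycle types of its inertia generators at the three branch points, which are essentially already recorded in Lemma \ref{lem:rho_ext}. The key strategic point is that it is cleaner to argue \emph{geometrically}: I would base change to $\overline{\fq}$ and work with the geometric monodromy group $\bar{G}=\mathrm{Gal}(F_d\overline{\fq}/\overline{\fq}(A))$, a normal subgroup of $G$ with $G/\bar{G}\cong \mathrm{Gal}(k/\fq)$, where $k$ is the full constant field of $F_d$. If I can show $\bar{G}=S_d$, then from $\bar{G}\trianglelefteq G\le S_d$ I obtain $G=S_d$ \emph{and} $\bar{G}=G$, the latter forcing $k=\fq$. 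Thus the statements about the Galois group and about the constant field are settled simultaneously.

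To read off the geometric inertia, note that $\overline{\fq}(T)/\overline{\fq}(A)$ is the degree-$d$ cover $A=-(1+T)/T^d$, irreducible because this rational function has degree $d$, matching $\deg(At^d+t+1)$. By Lemma \ref{lem:rho_ext} together with an inspection of the fibers over $\overline{\fq}$ (where all residue degrees equal $1$), the branch points are exactly $P_\infty$, $P_0$, and the point $P_1$ with $A=(-1)^d(d-1)^{d-1}/d^d$. Over $P_\infty$ the cover is totally ramified, so inertia is generated by a $d$-cycle; over $P_0$ the ramification type is $(d-1,1)$, giving a $(d-1)$-cycle; over $P_1$ the fiber has one point of ramification index $2$ and $d-2$ simple points, giving a transposition. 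The hypothesis $\gcd(q,(d-1)d)=1$ (which in particular forces $q$ odd) guarantees that all three indices are tame, so these cyclic inertia groups are genuinely generated by elements of those cycle types, and these elements lie in $\bar{G}$.

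The group theory is then short. Since $At^d+t+1$ is irreducible, $\bar{G}$ is transitive; containing a $(d-1)$-cycle $\sigma_0$, which fixes a single root and cyclically permutes the remaining $d-1$, forces the stabilizer of that root to act transitively on the rest, so $\bar{G}$ is $2$-transitive and in particular primitive. A primitive subgroup of $S_d$ that contains a transposition equals $S_d$ by Jordan's theorem, and $\bar{G}$ contains the transposition coming from $P_1$; hence $\bar{G}=S_d$. (The $d$-cycle is not even needed for this argument.)

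For the genus I would apply the Riemann--Hurwitz formula to $F_d\overline{\fq}/\overline{\fq}(A)$, which is Galois of degree $d!$ over a rational field and tamely ramified. Above a branch point with inertia of order $e$ there are $d!/e$ primes, each of ramification index $e$, hence contributing $e-1$ to the different; summing over $P_\infty$ ($e=d$), $P_0$ ($e=d-1$), and $P_1$ ($e=2$) gives
$$2g_d-2=-2\,d!+\frac{d!}{d}(d-1)+\frac{d!}{d-1}(d-2)+\frac{d!}{2},$$
and collecting the terms over $(d-2)!$ reduces the right-hand side to $\tfrac12(d^2-5d+2)(d-2)!$, yielding the claimed formula. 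I expect the only genuinely delicate step to be the one already flagged: over $\fq$ the ramified place above $P_1$ could a priori have residue degree $2$, in which case the inertia generator would be a double transposition rather than a transposition, breaking Jordan's argument; passing to $\overline{\fq}$, where $f=1$ is automatic, is exactly what removes this ambiguity, and it is also why the constant-field claim falls out for free. As a sanity check the formula gives $g_3=g_4=0$, consistent with Corollary \ref{cor:F3}.
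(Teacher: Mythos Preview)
Your proof is correct and follows essentially the same approach as the paper: pass to $\overline{\fq}$, show the geometric Galois group is $2$-transitive and contains a transposition (hence equals $S_d$), deduce $G=S_d$ and that the constant field is $\fq$, and then compute the genus via Riemann--Hurwitz using tame ramification of indices $d$, $d-1$, $2$ at the three branch points. The only minor variation is how $2$-transitivity is obtained: the paper invokes Proposition~\ref{prop:splitfield2} directly (the extension $\overline{\fq}(A,T_1,T_2)/\overline{\fq}(A,T_1)$ has degree $d-1$), whereas you read it off from the $(d-1)$-cycle inertia generator at $P_0$; both arguments amount to showing the stabilizer of one root is transitive on the remaining $d-1$.
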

\begin{proof}
We denote by $G$ and $\overline{G}$ the Galois groups of the extensions $F_d/\fq(A)$ and $\overline{\mathbb{F}}_qF_d/\overline{\mathbb{F}}_q(A)$ respectively. 
From Proposition \ref{prop:splitfield2} we have $[\overline{\mathbb{F}}_q(A,T_1,T_2) : \overline{\mathbb{F}}_q(A,T_1)] = d-1$. Thus the Galois group $\overline{G}$ acts $2$-transitively on the $d$ roots of $At^d+t+1$. 
Let $P'$ be the place of $\overline{\mathbb{F}}_q(A)$ corresponding to $A = (-1)^d(d-1)/d^d$ in the extension $\overline{\mathbb{F}}_q(A,T_1)/\overline{\mathbb{F}}_q(A)$. Proceeding in the same way as for $P$ in Lemma \ref{lem:rho_ext}, we may conclude that there is one place above $P'$ with ramification index two while all the other places above $P'$ are unramified. By the ``Cycle Lemma'' in \cite[Section 19]{Abhyankar}, we see that $\overline{G}$ contains a transposition. Note that all the conditions in the Cycle Lemma are satisfied because we extended the constant field to $\overline{\mathbb{F}}_q$.
Thus, $\overline{G}$ is isomorphic to a 2-transitive subgroup of $S_d$ containing one transposition and, consequently, isomorphic to the symmetric group $S_d$. In particular, we see that $[\overline{\mathbb{F}}_qF_d:\overline{\mathbb{F}}_q(A)]=d!$. Since $\overline{G}$ is isomorphic to a subgroup of $G$, and $G$ is a subgroup of $S_d$, it follows that $[F_d:\fq(A)]=d!$. Moreover, the equality $[\overline{\mathbb{F}}_qF_d:\overline{\mathbb{F}}_q(A)]=[F_d:\fq(A)]$ implies that $\fq$ is the full constant field of $F_d$.   

Since $\gcd(q,(d-1)d)=1$, Lemma \ref{lem:rho_ext} implies that all the ramification in $\fq(T_1)/\fq(A)$ is tame. Since $F_d$ is the compositum of conjugates of $\fq(T_1)$, Abhyankar's lemma on ramification in compositum of function fields implies that only the places $P_\infty$, $P_0$, and $P$ ramify in $F_d/\fq(A)$ with ramification indices $d$, $d-1$ and $2$ respectively. Hence the degree $D$ of the different of the extension $F_d/\fq(A)$ equals 
$$D=\left(\frac{d-1}{d}+\frac{d-2}{d-1}+\frac{1}{2}\right)\cdot d!.$$
The stated formula for the genus of $F_d$ now follows from the Riemann-Hurwitz genus formula applied to the extension $F_d/\fq(A)$.
\end{proof}

\begin{remark}\normalfont
In the proof above, the reasoning that shows that the Galois group of the polynomial $At^d+t+1$ is the symmetric group $S_d$ is similar to that in an example from Chapter 4 in \cite{Serre}. There, it was shown that the polynomial $X^n-X^{n-1}-T \in \mathbb{Q}(T)[X]$ has Galois group $S_n$, also by observing that the Galois group is two-transitive and contains a transposition. Considering the reciprocal polynomial, one obtains that the polynomial $TX^n+X-1 \in \mathbb{Q}(T)[X]$ has Galois group $S_n$. Our polynomial would therefore have Galois group $S_d$ when viewed as a polynomial in $\mathbb{Q}(A)[t]$. The cited Cycle Lemma from \cite{Abhyankar} makes sure that the reasoning is also valid when working over $\fq$ as long as $\gcd(q,(d-1)d)=1$.  
\end{remark}

We are now ready to state and prove a sufficient condition for the existence of $A \in \fq$ such that the polynomial $At^d+t+1$ splits over $\fq$. We record this as a corollary. 

\begin{corollary}\label{cor:Fdbound}
Suppose that $\gcd(q,(d-1)d)=1$. Then there exists $A \in \fq$ such that the polynomial $At^d+t+1$ splits over $\fq$ if
\begin{equation}\label{eq:existsAsplit1}
q+1-\lfloor 2 \sqrt{q} \rfloor \left(1+\frac{d^2-5d+2}{4}(d-2)!\right) -\left(\frac{1}{d}+\frac{1}{d-1}+\frac{1}{2}\right) d! >0.
\end{equation}
\end{corollary}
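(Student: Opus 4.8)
The plan is to count $\fq$-rational points on the splitting field $F_d$ using the Hasse--Weil bound, and to show that under the stated inequality there are enough rational points to guarantee a fiber of $A$ over which $At^d+t+1$ splits completely. By Theorem 1.17, the hypothesis $\gcd(q,(d-1)d)=1$ ensures that $F_d$ is a function field with full constant field $\fq$ and explicitly computed genus $g_d=1+\tfrac{d^2-5d+2}{4}(d-2)!$. The key observation is that a value $A_0\in\fq$ makes $At^d+t+1$ split over $\fq$ precisely when all $d$ roots are $\fq$-rational, which translates into the place $A=A_0$ of $\fq(A)$ splitting completely in the Galois extension $F_d/\fq(A)$; such a place contributes exactly $d!$ places of degree one of $F_d$ lying above it.

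First I would apply the Hasse--Weil bound to $F_d$ to get that the number $N$ of degree-one places of $F_d$ satisfies $N\ge q+1-2g_d\sqrt q$, and since $N$ is an integer, $N\ge q+1-\lfloor 2\sqrt q\rfloor\,g_d$. Next I would account for the degree-one places of $F_d$ that do \emph{not} lie above a completely split rational place of $\fq(A)$: these are exactly the degree-one places lying over the finitely many ramified places $P_\infty$, $P_0$, $P$ of $\fq(A)$ identified in Lemma 1.12 and Theorem 1.17, together with the place at infinity of $\fq(A)$ itself. Since the ramification indices there are $d$, $d-1$ and $2$ respectively, each ramified place of $\fq(A)$ can have at most $d!/d$, $d!/(d-1)$, $d!/2$ places above it, so the total number of ``bad'' degree-one places of $F_d$ is bounded above by $\bigl(\tfrac1d+\tfrac1{d-1}+\tfrac12\bigr)d!$.

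Subtracting this bound from the Hasse--Weil count, I would conclude that if
\[
q+1-\lfloor 2\sqrt q\rfloor\Bigl(1+\tfrac{d^2-5d+2}{4}(d-2)!\Bigr)-\Bigl(\tfrac1d+\tfrac1{d-1}+\tfrac12\Bigr)d!>0,
\]
then there exists at least one degree-one place $Q$ of $F_d$ that lies over an \emph{unramified} rational place $A=A_0$ of $\fq(A)$. Because $F_d/\fq(A)$ is Galois, having one degree-one place over the unramified $A_0$ forces all $d!$ places above $A_0$ to be of degree one, i.e.\ $A_0$ splits completely; hence all $d$ roots of $A_0t^d+t+1$ lie in $\fq$ and the polynomial splits over $\fq$.

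The main obstacle I anticipate is the bookkeeping in the second step: one must be careful that the count $\bigl(\tfrac1d+\tfrac1{d-1}+\tfrac12\bigr)d!$ genuinely dominates \emph{all} degree-one places that fail to sit over a completely split fiber, including the ramified fibers and the infinite place, and that a single rational place lying over an unramified $A_0$ really does force complete splitting (this uses the transitivity of the Galois action on the fiber, so that the local degrees are all equal). Once these two facts are secured, the inequality is exactly the arithmetic needed to produce a good value of $A$.
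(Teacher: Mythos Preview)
Your overall strategy matches the paper's proof exactly: apply a Weil-type bound to $F_d$, subtract an upper bound for the rational places lying over the three ramified places, and use Galois transitivity to conclude that any remaining rational place forces complete splitting of its fiber. One genuine error, however, is your passage from $N\ge q+1-2g_d\sqrt q$ to $N\ge q+1-\lfloor 2\sqrt q\rfloor\,g_d$ ``since $N$ is an integer.'' Integrality of $N$ only yields $N\ge q+1-\lfloor 2g_d\sqrt q\rfloor$, and in general $\lfloor 2g_d\sqrt q\rfloor$ is strictly larger than $g_d\lfloor 2\sqrt q\rfloor$, so your argument as written proves a weaker inequality than the one in the statement. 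The paper closes this gap by invoking \emph{Serre's refinement} of the Hasse--Weil bound, which asserts directly that $|N-(q+1)|\le g_d\lfloor 2\sqrt q\rfloor$; this is a nontrivial theorem and not a consequence of rounding.

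A minor point: your phrase ``together with the place at infinity of $\fq(A)$ itself'' is redundant, since the pole of $A$ is precisely the place $P_\infty$ you already listed among the three ramified places. Once you replace the rounding step by an appeal to Serre's bound and drop that redundancy, your proof is the same as the paper's.
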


\begin{proof}
Serre's refinement of Hasse-Weil bound (\cite[Theorem 5.3.1]{Stichtenoth}) shows that the function field $F_d$ has at least $q+1-\lfloor 2 \sqrt{q} \rfloor \cdot g_d$ many rational places. But, as seen above, the number of rational places of $F_d$ lying over the three ramified places of $\fq(A)$ in the extension $F_d/\fq(A)$ is at most $(1/d+1/(d-1)+1/2)\cdot d!$. Our hypothesis implies that $F_d$ has at least one more rational place, say $P$. The rational place  $\tilde{P}$ of $\fq(A)$ lying below $P$ thus splits completely in the extension $F_d/\fq(A)$. This is equivalent to $\tilde{P}$ splitting in $\fq(T_1)/\fq(A)$ and hence equivalent to the splitting of the polynomial $A(\tilde{P})t^d+t+1$ over $\fq$.
\end{proof}


We remark that the sufficient condition obtained in Corollary \ref{cor:Fdbound} can potentially be improved. First, Serre's refinement of the Hasse-Weil bound on the number of rational places in $F_d$ may not be attained. So a more precise knowledge of the number of rational places of $F_d$ can be helpful. Next, not all places lying above the three ramified places of $\fq(A)$ in $F_d/\fq(A)$ may be rational. In Section \ref{subsec:small_d} we will use these observations to refine Corollary \ref{cor:Fdbound} for some small values of $d$ and/or $q$. Before that, we present some results for particular values of $(d, q)$ with $\gcd(q,(d-1)d)>1$. 


\subsubsection{Some observations for the case $\gcd(q,(d-1)d) > 1$.}

Having derived a sufficient condition for existence of a polynomial $At^d + t + 1 \in \fq[t]$ that splits over $\fq$ under the assumption $\gcd (q, d(d-1)) = 1$, we now proceed to the case when $\gcd(q,(d-1)d) > 1$. Under the latter hypothesis, we will mainly restrict our attention to two special cases. First, note that the Galois group of $At^d + t + 1 \in \fq(A)[t]$ is the same as the Galois group of the reciprocal polynomial $t^d + t^{d-1} + A$.
\begin{enumerate}
\item[(a)]\textbf{$d = p^e$ for some $e$.} When $d$ is of the form $p^e$, where $p = \char \ \fq$, then the Galois group of such polynomials is known thanks to Abhyankar. In particular, the result \cite[Theorem 1.5]{Abhyankar2} states that the Galois group of $t^d + t^{d-1} + A$ over $\overline{\mathbb{F}}_p(A)$ is given by $\text{AGL} (1, p^e)$, that is, the group of all affine transformations of the affine line over $\mathbb{F}_{p^e}$. This group has order $p^e(p^e-1) = d(d-1)$. 

\item[(b)] \textbf{$d = p^e + 1$ for some $e$.} In this case, the reciprocal polynomial $t^d + t^{d-1} + A$ falls within the family considered in \cite[Theorem 1.3 (1.3.3)]{Abhyankar2}. This theorem states that the Galois group of $t^d + t^{d-1} + A$ over $\overline{\mathbb{F}}_p(A)$ lies between $\mathrm{PSL}(2,p^e)$ and $\mathrm{PGL}(2,p^e)$. Note that $\mathrm{PGL}(2,p^e)$ has order $d(d-1)(d-2)$, while $\mathrm{PSL}(2,p^e)$ is a subgroup of $\mathrm{PGL}(2,p^e)$ of index either one or two, depending on whether $q$ is even or odd. 
\end{enumerate}
These results show that, in these two particular cases, the splitting field of $At^d+t+1$ over the field $\overline{\mathbb{F}}_p(A)$ can be obtained by adding either two or three of its roots. In the following, we determine the splitting fields of $At^d + t + 1$ over $\fq(A)$ in the cases when $d=p^e$ and $d=p^e+1$.  We will use the notation $T_1, T_2$ and $\rho$ from Proposition \ref{prop:splitfield2}. 
\begin{theorem}\label{thm:d=pe}
If $d=p^e$, then the splitting field of $At^d+t+1$ over $\fq(A)$ is the composite of $\fq(T_1,T_2)=\fq(\rho)$ and the finite field with $p^e$ elements.  
\end{theorem}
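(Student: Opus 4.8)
The plan is to exploit the fact that when $d=p^e$ the polynomial controlling the differences of roots is additive, which forces the root set to be an affine line over $\mathbb{F}_{p^e}$. First I would observe that the map $L(w)=Aw^{p^e}+w$ is a $p$-linearized (hence additive) polynomial. Consequently, if $T$ is any root of $At^d+t+1$ and $w\in\ker L$, then
\[
A(T+w)^{p^e}+(T+w)+1=\bigl(AT^{p^e}+T+1\bigr)+\bigl(Aw^{p^e}+w\bigr)=0,
\]
so $T+w$ is again a root; conversely, for two roots $T,T'$ one computes $L(T-T')=(AT^{p^e}+T)-(AT'^{p^e}+T')=(-1)-(-1)=0$, so their difference lies in $\ker L$. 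Thus the set of roots is exactly the coset $T_1+\ker L$.

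Next I would pin down the structure of $\ker L$. Since $\deg L=p^e$ and the derivative $L'(w)=1$ is nonzero, $L$ is separable and $\ker L$ has exactly $p^e$ elements. Moreover, for $c\in\mathbb{F}_{p^e}$ we have $c^{p^e}=c$, whence $L(cw)=Ac^{p^e}w^{p^e}+cw=c\,(Aw^{p^e}+w)=c\,L(w)$. Therefore $\ker L$ is an $\mathbb{F}_{p^e}$-vector space, necessarily of dimension one by a cardinality count. Fixing any nonzero $w_0\in\ker L$ gives $\ker L=\{cw_0 : c\in\mathbb{F}_{p^e}\}$, so the roots of $At^d+t+1$ are precisely $\{T_1+cw_0 : c\in\mathbb{F}_{p^e}\}$.

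Finally I would assemble the splitting field $F_d$. Taking $w_0=T_2-T_1$, which is nonzero and lies in $\fq(T_1,T_2)=\fq(\rho)$ by Proposition \ref{prop:splitfield2}, we get $F_d=\fq(\rho)\bigl(\{cw_0 : c\in\mathbb{F}_{p^e}\}\bigr)$. Since $w_0\in\fq(\rho)$ is nonzero, adjoining the products $cw_0$ is the same as adjoining the scalars $c$ themselves, because $c=(cw_0)/w_0$; hence $\fq(\rho)\bigl(\{cw_0\}\bigr)=\fq(\rho)\cdot\mathbb{F}_{p^e}$. This simultaneously shows that $\mathbb{F}_{p^e}\subseteq F_d$ is forced (the remaining roots cannot be reached without the constants $c$, since $T_1$ and $w_0$ already lie in $F_d$) and that $\fq(\rho)\cdot\mathbb{F}_{p^e}$ already contains every root, yielding $F_d=\fq(\rho)\cdot\mathbb{F}_{p^e}$, the composite of $\fq(T_1,T_2)=\fq(\rho)$ with the field of $p^e$ elements.

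I expect the only genuinely delicate point to be the verification that $\ker L$ is stable under multiplication by $\mathbb{F}_{p^e}$, so that it is a one-dimensional $\mathbb{F}_{p^e}$-space rather than merely an $e$-dimensional $\mathbb{F}_p$-space; the rest is bookkeeping. As a consistency check, this agrees with Abhyankar's result that the geometric Galois group is $\mathrm{AGL}(1,p^e)$ of order $d(d-1)$, matching $[\fq(\rho):\fq(A)]=d(d-1)$ from Proposition \ref{prop:splitfield2}, with the passage to the full splitting field over $\overline{\mathbb{F}}_p$ being exactly the constant-field extension by $\mathbb{F}_{p^e}$.
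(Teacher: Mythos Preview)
Your proof is correct. Both you and the paper identify the root set as an affine $\mathbb{F}_{p^e}$-line $T_1+\ker L$, but the executions differ. The paper exhibits an explicit element $B\in\fq(\rho)$ with $A=-B^{d-1}$ (verified from Equation~\eqref{eq:Ainrho}) and rewrites $-B(At^d+t+1)=(Bt)^d-(Bt)-B$ as an Artin--Schreier equation, so the roots become $T_1+\alpha/B$ with $\alpha\in\mathbb{F}_{p^e}$. Your argument instead observes directly that $L(w)=Aw^{p^e}+w$ is $\mathbb{F}_{p^e}$-linear (since $c^{p^e}=c$ for $c\in\mathbb{F}_{p^e}$), forcing $\ker L$ to be one-dimensional over $\mathbb{F}_{p^e}$, after which any nonzero difference $T_2-T_1\in\fq(\rho)$ serves as a basis vector. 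The paper's $1/B$ is precisely such a basis vector, so the two arguments describe the same structure; yours is a bit more economical in that it never needs to compute $B$ or verify the identity $A=-B^{d-1}$, while the paper's explicit Artin--Schreier form has the advantage of tying directly into standard function-field machinery.
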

\begin{proof}
Over $\fq(\rho)$, the equation $At^d + t + 1 = 0$ can be rewritten as an Artin-Schreier equation in the following way. 
Define
$$
B := - \frac{\rho^d - \rho}{(\rho - 1)^{d+1}} \in  \fq(\rho).
$$
From Equation \eqref{eq:Ainrho}, we obtain $A=-B^{d-1}$. Consequently,
$$
-B(At^d + t + 1) = (Bt)^d - Bt - B \in \fq(\rho)[t]. 
$$
Now the splitting field is easy to determine: By definition, $T_1$ is a root of $At^d + t + 1 = 0$ and the other roots are given by $T_1 + \alpha/B$, for some $\alpha \in \mathbb{F}_{p^e}\setminus \{0\}$. Therefore the splitting field of $At^d+t+1$ over $\fq(A)$ is precisely the composite of $\mathbb{F}_{p^e}$ and $\fq(\rho)$.
\end{proof}

As a consequence, we get a result similar to Corollary \ref{cor:Fdbound}.

\begin{corollary}\label{cor:d=pe}
    If $d=p^e$, then there exists $A \in \fq \setminus \{0\}$ such that $At^d + t + 1$ splits over $\fq$ if and only if $\mathbb{F}_{p^e} \subseteq \fq$ and $\left[ \fq : \mathbb{F}_{p^e}\right] > 1$.
\end{corollary}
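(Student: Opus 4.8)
The plan is to exploit the additive structure that is special to the case $d=p^e$. Since $d=p^e$, the polynomial map $L_A(t):=At^{p^e}+t$ is $\mathbb{F}_p$-linear, and its formal derivative is the constant $1$, so $At^d+t+1$ is automatically separable; this is precisely the structural feature behind the Artin--Schreier transformation used in the proof of Theorem \ref{thm:d=pe}. Consequently, if $t_0\in\overline{\fq}$ is any root of $At^d+t+1$, then the full root set is the coset $t_0+W_A$, where $W_A:=\ker L_A$ is an $\mathbb{F}_p$-subspace of $\overline{\fq}$ with exactly $p^e=d$ elements. Because separability makes ``splits over $\fq$'' equivalent to ``all $d$ roots lie in $\fq$'', I would record the clean criterion: $At^d+t+1$ splits over $\fq$ if and only if $W_A\subseteq\fq$ and at least one root lies in $\fq$.

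First I would pin down $W_A$ explicitly. Its nonzero elements satisfy $t^{p^e-1}=-A^{-1}$, so the ratio of any two of them is a $(p^e-1)$-th root of unity, i.e. an element of $\mathbb{F}_{p^e}^{\ast}$. Comparing cardinalities, $|W_A\setminus\{0\}|=p^e-1=|\mathbb{F}_{p^e}^{\ast}|$, one gets $W_A=t_1\mathbb{F}_{p^e}$ for any fixed nonzero $t_1\in W_A$, with $A=-t_1^{1-p^e}$. Hence $W_A\subseteq\fq$ for some $A\in\fq\setminus\{0\}$ if and only if $\mathbb{F}_{p^e}\subseteq\fq$: if $\mathbb{F}_{p^e}\subseteq\fq$ one takes any $t_1\in\fq^{\ast}$, while conversely $W_A\subseteq\fq$ forces $\mathbb{F}_{p^e}=t_1^{-1}W_A\subseteq\fq$. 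This establishes necessity of the first condition and parametrizes the admissible values $A=-t_1^{1-p^e}$ with $t_1\in\fq^{\ast}$, reducing everything to deciding when a root can be forced into $\fq$.

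For the root condition I would substitute $t=t_1u$, which transforms $L_A(t)=-1$ into the Artin--Schreier equation $u^{p^e}-u=t_1^{-1}$. By the standard description of the image of the map $u\mapsto u^{p^e}-u$ on $\fq$ over $\mathbb{F}_{p^e}$, this is solvable in $\fq$ if and only if $\mathrm{Tr}_{\fq/\mathbb{F}_{p^e}}(t_1^{-1})=0$. Thus a suitable $A$ exists exactly when $\fq$ contains a nonzero element whose trace over $\mathbb{F}_{p^e}$ vanishes; since the trace $\mathrm{Tr}_{\fq/\mathbb{F}_{p^e}}$ is surjective and $\mathbb{F}_{p^e}$-linear, its kernel has $|\fq|/|\mathbb{F}_{p^e}|$ elements, so a nonzero trace-zero element exists precisely when $[\fq:\mathbb{F}_{p^e}]>1$. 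Combining with the previous paragraph yields both directions of the stated equivalence; in particular, when $\fq=\mathbb{F}_{p^e}$ the only admissible $A$ is $A=-1$, for which $-t^{p^e}+t+1$ reduces to the constant $1$ on $\fq$ and has no root, confirming necessity of $[\fq:\mathbb{F}_{p^e}]>1$.

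The main obstacle I anticipate is not any single computation but getting the two conditions to emerge cleanly and simultaneously: the identification $W_A=t_1\mathbb{F}_{p^e}$ must be argued with care (it is what converts ``$W_A\subseteq\fq$'' into ``$\mathbb{F}_{p^e}\subseteq\fq$''), and the trace criterion must be handled so that the existence of a nonzero trace-zero element is seen to be equivalent to $[\fq:\mathbb{F}_{p^e}]>1$. This trace computation is the crux, since it is exactly what explains why the second hypothesis $[\fq:\mathbb{F}_{p^e}]>1$ appears in the statement and cannot be dropped.
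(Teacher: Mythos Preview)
Your argument is correct, but it proceeds along a genuinely different route from the paper's own proof. The paper derives the corollary from the function-field description of the splitting field $F_d$ established in Theorem~\ref{thm:d=pe}: once one knows that $F_d$ is the compositum of $\fq(\rho)$ with $\mathbb{F}_{p^e}$, the condition $\mathbb{F}_{p^e}\subseteq\fq$ is forced by requiring the constant field of $F_d$ to equal $\fq$, and the condition $[\fq:\mathbb{F}_{p^e}]>1$ emerges from a rational-place count in the degree $d(d-1)$ extension $\fq(\rho)/\fq(A)$ (there are exactly $d+1$ ramified $\fq$-rational places, so an unramified one exists precisely when $q+1>d+1$, i.e., $q>d=p^e$). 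You bypass the function-field machinery entirely and work directly with the additive structure of the $\mathbb{F}_p$-linear map $L_A(t)=At^{p^e}+t$: identifying $\ker L_A = t_1\mathbb{F}_{p^e}$ gives $\mathbb{F}_{p^e}\subseteq\fq$, and the substitution $t=t_1u$ turns the root condition into the Artin--Schreier equation $u^{p^e}-u=t_1^{-1}$, whose solvability in $\fq$ is governed by the trace $\mathrm{Tr}_{\fq/\mathbb{F}_{p^e}}$; the kernel-size count then yields $[\fq:\mathbb{F}_{p^e}]>1$. Your approach is more elementary and entirely self-contained, while the paper's place-counting argument has the virtue of being uniform with the method used for the neighbouring cases (Corollary~\ref{cor:Fdbound} and Corollary~\ref{cor:d=pe+1}).
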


\begin{proof}
If $\mathbb{F}_{p^e} \not\subseteq \fq$, then  Theorem \ref{thm:d=pe} shows that the constant field of  $F_d$ is strictly larger than $\fq$. In particular, no $\fq$-rational place of $\fq(A)$ can split in the extension $F_d/\fq(A)$. Therefore we may assume from now on that $\mathbb{F}_{p^e} \subseteq \fq$. In particular, we may assume that $F_d=\fq(\rho)$. From Lemma \ref{lem:split1}, we see that 
the only ramified places in $\fq(\rho)/\fq(A)$ are those lying above the zero and the pole of $A$. By considering Equation \eqref{eq:Ainrho}, we conclude that there are $d$ many $\fq$-rational places with ramification index $d-1$ lying above the zero of $A$, and a single totally ramified $\fq$-rational place lying above the pole of $A$. Since, the total number of $\fq$-rational places of $\fq(\rho)$ is $q+1$, there is an $\fq$-rational place $P$ of $\fq(\rho)$ that is unramified in the extension $\fq(\rho)/\fq(A)$ if and only if 
    $$q + 1 - (d + 1) > 0,$$
or equivalently $q > d$. Moreover, if $P$ is such an $\fq$-rational place of $\fq(\rho)$ that is unramified, then the place $Q$ of $\fq(A)$ lying below $P$ splits in the extension $\fq(\rho)/\fq(A)$. 
\end{proof}

Next, we investigate the case $d = p^e + 1$. Note that in this case, $d \ge 3$. Let $T_1, T_2, T_3$ be three distinct roots of $At^d + t + 1$ in an algebraic closure of $\fq(A)$. As mentioned above,  $\rho$ stands for $T_2/T_1$. We introduce a new notation $\sigma = T_3/T_1$.

\begin{theorem}\label{thm:d=pe+1}
If $d=p^e+1$, then the splitting field of $At^d+t+1$ over $\fq(A)$ is the composite of the finite field with $p^e$ elements and $\fq(T_1,T_2,T_3)=\fq((\sigma-1)/(\sigma-\rho))$.
\end{theorem}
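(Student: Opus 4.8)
The plan is to adapt the strategy of Theorem \ref{thm:d=pe}, replacing the Artin--Schreier structure used there by a \emph{linearized} (additive) structure tailored to $d=p^e+1$. Writing $u_i=T_i/T_1$ for the ratios of the roots, I would first record, exactly as in the proof of Proposition \ref{prop:splitfield2}, that after factoring out $t-T_1$ the remaining $d-1$ roots of $At^d+t+1$ correspond to the solutions $u$ of $u^{d-1}+\cdots+u+c=0$, where $c:=1/(T_1+1)\in\fq(\rho)$. Since $d-1=p^e$, the Frobenius identity $u^d-u=u(u-1)^{p^e}$ rewrites this as $u(u-1)^{p^e-1}=-c$. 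The decisive move is the substitution $y=u/(u-1)$: using $(y-1)^{p^e}=y^{p^e}-1$ the equation becomes the additive equation
\[
y^{p^e}+c^{-1}y=1 .
\]
Thus the $p^e$ values $y_i=u_i/(u_i-1)$ are precisely the solutions of $L(y)=1$, where $L(y):=y^{p^e}+c^{-1}y$ is an $\mathbb F_{p^e}$-linear polynomial over $\fq(\rho)$.

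Because $L$ is $\mathbb F_{p^e}$-linear and separable of degree $p^e$, its kernel is a one-dimensional $\mathbb F_{p^e}$-vector space, and the full solution set of $L(y)=1$ is the coset $y_\rho+\ker L$, where $y_\rho=\rho/(\rho-1)$ is the solution coming from the known root $T_2$. Consequently $z:=y_\sigma-y_\rho$ (with $y_\sigma=\sigma/(\sigma-1)$) is a nonzero kernel element and therefore satisfies the Kummer-type relation $z^{p^e-1}=-c^{-1}=-(T_1+1)$ over $\fq(\rho)$. The heart of the argument is then an explicit comparison of $z$ with the proposed generator $w=(\sigma-1)/(\sigma-\rho)$: a direct computation gives $y_\sigma-y_\rho=(\rho-\sigma)/\big((\sigma-1)(\rho-1)\big)$, hence $z=-1/\big(w(\rho-1)\big)$. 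Combining this with $z^{p^e-1}=-(T_1+1)$, the formula for $T_1$ in \eqref{eq:T1inrho}, and the fact that $(-1)^{p^e-1}=1$, one obtains the clean relation $w^{p^e-1}=\rho/(\rho-1)$. This shows $\rho=w^{p^e-1}/(w^{p^e-1}-1)\in\fq(w)$, and since $\sigma=(\rho w-1)/(w-1)\in\fq(w)$ while $w\in\fq(\rho,\sigma)$, we conclude $\fq(T_1,T_2,T_3)=\fq(\rho,\sigma)=\fq(w)$ is a rational function field, exactly as claimed.

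Finally I would identify the splitting field $F_d$. Taking $z$ itself as an $\mathbb F_{p^e}$-basis of $\ker L$, every solution of $L(y)=1$ has the form $y_\rho+\mu z$ with $\mu\in\mathbb F_{p^e}$; since $y_\rho$ and $z=-1/(w(\rho-1))$ both lie in $\fq(w)$, each root $T_i=T_1u_i$ lies in $\mathbb F_{p^e}\cdot\fq(w)$, giving $F_d\subseteq\mathbb F_{p^e}\cdot\fq(T_1,T_2,T_3)$. For the reverse inclusion it suffices to check $\mathbb F_{p^e}\subseteq F_d$: the ratios $(y_{\sigma'}-y_\rho)/z$, as $\sigma'$ runs over the remaining root-ratios, lie in $F_d$ and exhaust $\mathbb F_{p^e}^{\times}$, so they belong to the constant field of $F_d$ and generate $\mathbb F_{p^e}$. (Abhyankar's theorem \cite[Theorem~1.3]{Abhyankar2}, quoted above, independently guarantees that three roots already generate the splitting field over $\overline{\mathbb F}_p$, which is consistent with this picture.) The step I expect to be the main obstacle is finding the linearizing substitution $y=u/(u-1)$ and verifying the identity $w^{p^e-1}=\rho/(\rho-1)$: this single computation simultaneously proves rationality of $\fq(T_1,T_2,T_3)$ and pins down the generator $(\sigma-1)/(\sigma-\rho)$, after which the constant-field bookkeeping is routine.
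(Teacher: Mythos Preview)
Your approach is sound and takes a genuinely different route from the paper. One small slip: redoing your own computation with $T_1+1=-1/\bigl(\rho(\rho-1)^{p^e-1}\bigr)$ (which follows from \eqref{eq:T1inrho} via $\rho^d-\rho=\rho(\rho-1)^{p^e}$) and $z=-1/\bigl(w(\rho-1)\bigr)$ yields $w^{p^e-1}=\rho$, not $\rho/(\rho-1)$; this matches the paper's equation \eqref{eq:F4} and does not affect your conclusion that $\rho\in\fq(w)$.

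The paper reaches the same relation by a purely computational trick: it writes down the factor $P(s)$ left after dividing $At^d+t+1$ by $A(t-T_1)(t-T_2)$ and setting $s=t/T_1$, then multiplies $P(s)$ separately by $(s-1)$ and by $(s-\rho)$, collapses each product using Frobenius, and takes the ratio to obtain $\bigl((\sigma-1)/(\sigma-\rho)\bigr)^{d-2}=\rho$. Your route is more structural: the substitution $y=u/(u-1)$ turns the degree-$(d-1)$ factor into the $\mathbb{F}_{p^e}$-linear equation $y^{p^e}+c^{-1}y=1$, so the solution set is visibly a coset of a one-dimensional $\mathbb{F}_{p^e}$-space. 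This buys you two things the paper handles more tersely: a conceptual reason why exactly $\mathbb{F}_{p^e}$ must be adjoined (the kernel scalars $(y_{\sigma'}-y_\rho)/z$ exhaust $\mathbb{F}_{p^e}^\times$ inside $F_d$), and an explicit description of all roots over $\mathbb{F}_{p^e}\cdot\fq(w)$ without appealing to Abhyankar's theorem. The paper's argument, on the other hand, is shorter and requires no change of variables, getting directly to the generator identity in two lines of polynomial algebra.
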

\begin{proof}
Over $\fq(\rho)$, as in Proposition \ref{prop:splitfield2}, the polynomial $At^d + t + 1$ factors as 
\begin{eqnarray*}
At^d + t + 1 & = & A(t-T_1)\cdot \left( \frac{1}{A} + \sum_{i=1}^{d-1} t^{d-1-i}T_1^i \right)\\
& = & A(t-T_1)(t-T_2)\cdot \sum_{i=0}^{d-2}t^{d-2-i}\sum_{j=0}^iT_1^{i-j}T_2^{j}\\
& = & A(t-T_1)(t-T_2)\cdot \sum_{i=0}^{d-2}t^{d-2-i}T_1^i\sum_{j=0}^i\rho^j.
\end{eqnarray*}
The second equality is obtained by using $\frac{1}{A} + \sum_{i=1}^{d-1} T_2^{d-1-i}T_1^i = 0$.
Thus $T_3$ gives rise to a root $\sigma:=T_3/T_1$ of the polynomial $P(s):=\displaystyle{\sum_{i=0}^{d-2}s^{d-2-i}}\sum_{j=0}^i\rho^j$. Now using the fact that $d-1$ is a power of $p$, we obtain
\begin{eqnarray*}
(s-1) \cdot P(s)  =  \sum_{i=0}^{d-2}s^{d-1-i}\rho^i-\sum_{i=0}^{d-2}\rho^{i}
  =  s(s-\rho)^{d-2}-(\rho-1)^{d-2}.
\end{eqnarray*}
Moreover, 
\begin{eqnarray*}
(s-\rho) \cdot P(s)  =  \sum_{i=0}^{d-2}s^{d-1-i}-\sum_{i=0}^{d-2} \rho^{i+1}
 =  s(s-1)^{d-2}-\rho(\rho-1)^{d-2}.  
\end{eqnarray*}
Since $P(\sigma)=0$, we obtain that 
\begin{equation}\label{eq:F4}\left(\frac{\sigma-1}{\sigma-\rho}\right)^{d-2}=\frac{\sigma(\sigma-1)^{d-2}}{\sigma(\sigma-\rho)^{d-2}}=\frac{\rho(\rho-1)^{d-2}}{(\rho-1)^{d-2}}=\rho.\end{equation}
Note that Equation \eqref{eq:F4} implies that $\fq(T_1,T_2,T_3)=\fq(\rho,T_3)=\fq((\sigma-1)/(\sigma-\rho))$. 
Thus, the Galois closure of the polynomial $At^d+t+1$ over $\fq(A)$ can be obtained by adding $T_3$ to $\fq(\rho)$ and taking the compositum with the finite field with $p^e$ elements. 
\end{proof}

Again we easily derive a result analogous to Corollary \ref{cor:Fdbound}.

\begin{corollary}\label{cor:d=pe+1}
    Let $d=p^e + 1$ where $p$ is the characteristic. Then there exists $A \in \fq\setminus \{0\}$ such that $At^d + t + 1$ splits over $\fq$ if and only if $\mathbb{F}_{p^e} \subseteq \fq$ and $\left[ \fq : \mathbb{F}_{p^e}\right] > 2$.
\end{corollary}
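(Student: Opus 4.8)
The plan is to mirror the proof of Corollary \ref{cor:d=pe}, using Theorem \ref{thm:d=pe+1} in place of Theorem \ref{thm:d=pe} to control the splitting field $F_d$. For the necessity of $\mathbb{F}_{p^e}\subseteq\fq$, I would argue exactly as in Corollary \ref{cor:d=pe}: by Theorem \ref{thm:d=pe+1} the field $F_d$ contains $\mathbb{F}_{p^e}$, so if $\mathbb{F}_{p^e}\not\subseteq\fq$ then the full constant field of $F_d$ strictly contains $\fq$. Consequently $F_d$ has no $\fq$-rational places, and therefore no rational place of $\fq(A)$ can split completely into $\fq$-rational places of $F_d$; equivalently, no value $A=a\in\fq\setminus\{0\}$ makes $at^d+t+1$ split over $\fq$. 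Hence we may assume $\mathbb{F}_{p^e}\subseteq\fq$ for the rest of the argument.

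Under this assumption the compositum with $\mathbb{F}_{p^e}$ in Theorem \ref{thm:d=pe+1} is trivial, so $F_d=\fq(\mu)$ is a rational function field (with full constant field $\fq$ and exactly $q+1$ rational places), where $\mu=(\sigma-1)/(\sigma-\rho)$ satisfies $\mu^{d-2}=\rho$ by Equation \eqref{eq:F4}. Since $d-2=p^e-1$ is coprime to $p$, the field $\fq$ contains the $(d-2)$-th roots of unity, and $\rho$ has a simple zero in $\fq(\rho)$ and is thus not a proper power, the Kummer extension $\fq(\mu)/\fq(\rho)$ has degree $d-2$; combined with Proposition \ref{prop:splitfield2} this gives $[\fq(\mu):\fq(A)]=d(d-1)(d-2)=|\mathrm{PGL}(2,p^e)|$, so the Galois group is the whole of $\mathrm{PGL}(2,p^e)$. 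The next step is to pin down the ramification. By Lemma \ref{lem:rho_ext}(b)(i), applicable since $\gcd(q,(d-1)d)>1$ here, only the places $A=0$ and $A=\infty$ ramify, and I would match these with the two orbits of $\mathrm{PGL}(2,p^e)$ on $\mathbb{P}^1$ carrying nontrivial stabilizers: the orbit $\mathbb{P}^1(\mathbb{F}_{p^e})$ with Borel stabilizers (matching the place with two sub-places of indices $1$ and $d-1$, i.e. $A=0$) and the orbit $\mathbb{P}^1(\mathbb{F}_{p^{2e}})\setminus\mathbb{P}^1(\mathbb{F}_{p^e})$ with non-split torus stabilizers (matching the totally ramified place $A=\infty$). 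A Riemann--Hurwitz check confirms there are no further branch points, consistent with Lemma \ref{lem:rho_ext}.

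With this structure in hand, a value $a\in\fq\setminus\{0\}$ makes $at^d+t+1$ split over $\fq$ exactly when the rational place $A=a$ splits completely in $F_d/\fq(A)$, which — the extension being Galois and $a$ being unramified — happens precisely when some $\fq$-rational place of $\fq(\mu)$ lies above $A=a$. Since $F_d$ has exactly $q+1$ rational places, it suffices to count the rational places lying above the branch locus $\{0,\infty\}$ and compare with $q+1$. Above $A=0$ I would read off directly from Equation \eqref{eq:Ainrho} that the fibre is $\mu\in\{0,\infty\}\cup\mathbb{F}_{p^e}^{\ast}=\mathbb{P}^1(\mathbb{F}_{p^e})$, which is entirely $\fq$-rational and so contributes exactly $d$ rational places. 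Above $A=\infty$ the fibre is the orbit $\mathbb{P}^1(\mathbb{F}_{p^{2e}})\setminus\mathbb{P}^1(\mathbb{F}_{p^e})$, whose number of $\fq$-rational places is $(d-1)(d-2)$ if $\mathbb{F}_{p^{2e}}\subseteq\fq$ and $0$ otherwise.

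The final step is bookkeeping with $k:=[\fq:\mathbb{F}_{p^e}]$. If $k$ is even then $\mathbb{F}_{p^{2e}}\subseteq\fq$ and the branch locus carries $d+(d-1)(d-2)=(p^e)^2+1$ rational places, so a further (hence splitting) rational place exists iff $q>(p^e)^2$, i.e. $k>2$. If $k$ is odd then $\mathbb{F}_{p^{2e}}\not\subseteq\fq$, the branch locus carries only $d=p^e+1$ rational places, and a splitting place exists iff $q>p^e$, i.e. again $k>2$. Because these fibre counts are exact, in both parities the existence of a splitting $a$ is equivalent to $k=[\fq:\mathbb{F}_{p^e}]>2$, which is the claim. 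I expect the main obstacle to be the determination that the fibre over $A=\infty$ is genuinely the non-split torus orbit inside $\mathbb{P}^1(\mathbb{F}_{p^{2e}})$, together with the parity-dependent count of its $\fq$-rational points; the pleasant feature is that the two parities, despite giving different fibre sizes, collapse to the same threshold $k>2$.
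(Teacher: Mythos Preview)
Your proof is correct and follows the same strategy as the paper's: once $\mathbb{F}_{p^e}\subseteq\fq$ forces $F_d=\fq(\mu)$ to be rational with $q+1$ places, one counts the $\fq$-rational places lying above the branch locus $\{A=0,\,A=\infty\}$ and tests whether any remain. The paper simply asserts the fibre sizes ($d$ rational places over $A=0$; $(d-1)(d-2)$ over $A=\infty$, rational iff $\mathbb{F}_{p^{2e}}\subseteq\fq$) and finishes with ``simple calculations'', whereas your Kummer-degree computation, $\mathrm{PGL}(2,p^e)$-orbit identification of the two fibres, and parity split on $[\fq:\mathbb{F}_{p^e}]$ supply precisely the details the paper omits.
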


\begin{proof}
As in the proof of Corollary \ref{cor:d=pe}, if $\mathbb{F}_{p^e} \not\subseteq \fq$, then no place of $\fq(A)$ splits in the extension $F_d/\fq(A)$, since the constant field of $F_d$ is strictly larger than $\fq$ in that case. We may therefore assume that $\mathbb{F}_{p^e} \subseteq \fq$, which implies that $F_d=\fq((\sigma-1)/(\sigma-\rho))$. 

\noindent From Lemma \ref{lem:split1}, the ramified places in $\fq((\sigma-1)/(\sigma-\rho))/\fq(A)$ are those lying above the zero and the pole of $A$. There are $d$ such $\fq$-rational places with ramification index $(d-1)(d-2)$ lying above the zero of $A$, while $(d-1)(d-2)$  such $\fq$-rational places with ramification index $d$ lying above the pole of $A$. The places above the pole are $\fq$-rational if and only if $\mathbb{F}_{p^{2e}} \subseteq \fq$. Simple calculations now show that there is an unramified $\fq$-rational place in $\fq((\sigma-1)/(\sigma-\rho))$ if and only if $q > p^{2e}$, and the result follows.
\end{proof}

\subsection{Results for $d \in \{3,4,5,6\}$}\label{subsec:small_d}

Now we are ready to state the results for small values of $d$ using our approach. We begin with the case when $d=3.$

\begin{lemma}\label{lem:d=3}
The polynomial $At^3+t+1 \in \fq[t]$ splits over $\fq$ for exactly $\lfloor (q-2)/6 \rfloor$ values of $A\in \fq \setminus \{0\}$. 
\end{lemma}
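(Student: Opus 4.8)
The plan is to exploit Corollary \ref{cor:F3}, which identifies the splitting field $F_3$ of $At^3+t+1$ over $\fq(A)$ with the rational function field $\fq(\rho)$, the extension $\fq(\rho)/\fq(A)$ being Galois with group $S_3$ and degree $6$. By Lemma \ref{lem:split1} the number $N$ we must compute equals the number of degree-one places $\tilde P$ of $\fq(A)$, distinct from the zero $P_0$ and pole $P_\infty$ of $A$, that split completely in $\fq(\rho)/\fq(A)$: for $A=c\in\fq\setminus\{0\}$ the polynomial $ct^3+t+1$ has three distinct roots in $\fq$ exactly when the corresponding degree-one place splits completely in the cubic subextension $\fq(T_1)/\fq(A)$, equivalently in its Galois closure $\fq(\rho)$. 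Specializing Equation \eqref{eq:Ainrho} to $d=3$ gives the explicit degree-$6$ rational map
$$A=-\frac{\rho^2(\rho+1)^2}{(\rho^2+\rho+1)^3},$$
so concretely $N$ counts the values $c\in\fq\setminus\{0\}$ whose fiber under this map consists of six distinct $\fq$-rational points.

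The backbone of the count is the identity
$$q+1=6N+R,$$
where $R$ denotes the number of degree-one places of $\fq(\rho)$ lying above ramified places of $\fq(A)$. This holds because $\fq(\rho)$, being rational, has exactly $q+1$ rational places; every rational place of $\fq(\rho)$ lies above a rational place of $\fq(A)$ with residue degree one; and above an unramified rational place the places of $\fq(\rho)$ are either all rational (a completely split place, contributing six, and necessarily lying over some $c\in\fq\setminus\{0\}$, since the only branch values are $0$, $\infty$, and possibly $-4/27=A(1)$) or all of degree at least two (contributing none). Thus everything reduces to computing $R$, that is, to describing the fibers of the map $A(\rho)$ over the ramified places of $\fq(A)$.

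By Lemma \ref{lem:rho_ext} the ramified places of $\fq(A)$ are $P_0$ and $P_\infty$, together with exactly one further place $P$, corresponding to $A=-4/27$, when $\gcd(q,6)=1$. I will read off the rational points of each fiber directly from the explicit rational function. Over $P_0$ the fiber is $\{\rho=0,\ \rho=-1,\ \rho=\infty\}$ in odd characteristic and $\{0,1,\infty\}$ in characteristic two, giving three rational points in every case. Over $P_\infty$ the fiber is cut out by $\rho^2+\rho+1=0$: in characteristic three this degenerates to the single, totally and wildly ramified, rational point $\rho=1$, whereas otherwise it is the pair of primitive cube roots of unity, which are $\fq$-rational precisely when $q\equiv1\pmod 3$. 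Finally, when $\gcd(q,6)=1$ the fiber over $P$ is $\{1,-2,-1/2\}$, three rational points. The characteristics two and three are the delicate cases: there the ramification over $P_0$ and $P_\infty$ is wild, the value $-4/27$ collapses into $0$ (characteristic two) or into $\infty$ (characteristic three), and the fixed-point sets of the involutions and of the $3$-cycles in $S_3$ shrink, so these fibers must be recomputed from the explicit rational function rather than imported from the tame picture.

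Assembling the contributions yields $R=8$ or $6$ according as $q\equiv1$ or $q\equiv2\pmod 3$ in the coprime case $\gcd(q,6)=1$; $R=4$ when $q$ is a power of three; and $R=5$ or $3$ according as $q\equiv1$ or $q\equiv2\pmod 3$ when $q$ is a power of two. Substituting these into $N=(q+1-R)/6$ and running through the five possibilities for $q$ modulo $6$ (namely $q\equiv1,5\pmod 6$ in the coprime case, $q\equiv3$ for powers of three, and $q\equiv2,4$ for powers of two) gives in every instance $N=\lfloor(q-2)/6\rfloor$. I expect the main obstacle to be exactly this case analysis: correctly handling the wild ramification in characteristics two and three and the rationality of the cube roots of unity over $P_\infty$, and then checking that the five separate expressions for $N$ all coincide with the single floor formula.
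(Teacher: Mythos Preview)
Your proposal is correct and follows essentially the same approach as the paper: identify $F_3=\fq(\rho)$ via Corollary~\ref{cor:F3}, use the explicit rational map $A=-\rho^2(\rho+1)^2/(\rho^2+\rho+1)^3$ from Equation~\eqref{eq:Ainrho}, count the $q+1$ rational places of $\fq(\rho)$, subtract those lying above the ramified places $P_0$, $P_\infty$, and (when $\gcd(q,6)=1$) the place at $A=-4/27$, and divide by $6$. The only quibble is that the reference to Lemma~\ref{lem:split1} at the outset is misplaced---that lemma connects splitting of $At^d+t+1$ to the intersection count with $\cH_q$, whereas the equivalence you actually need (between $ct^3+t+1$ splitting over $\fq$ and the place $A=c$ splitting completely in $\fq(\rho)/\fq(A)$) is just the definition of the splitting field together with the Galois correspondence; but this does not affect the argument.
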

\begin{proof}
As before, let $F_3$ denote the Galois closure of the polynomial $At^3+t+1$ over $\fq(A)$. Furthermore, let $N_3$ denote the number of $A \in \fq$ for which the polynomial $At^3+t+1$ splits over $\fq$.
From Corollary \ref{cor:F3}, Theorem \ref{thm:d=pe} and \ref{thm:d=pe+1} we see that  $F_3=\fq(\rho)$. In particular, $F_3$ is a function field of genus zero. Therefore $F_3$ has $q+1$ many $\fq$-rational places. Moreover, for $d=3$, Equation \eqref{eq:Ainrho} shows that 
\begin{equation}\label{eq:AST}
A=-\frac{\rho^2 \left(\rho+1 \right)^2}{\left(\rho^2+\rho+1\right)^3}.
\end{equation}
Using this equation, a direct calculation shows that if $\char \ \fq = 3$, then the pole of $A$ is totally ramified with the zero of $\rho-1$ lying above it.  If $q \equiv 1 \pmod3$, two rational places lie above it, namely the zeroes of $\rho^2+\rho+1$. If $q \equiv 2 \pmod3$, one place of degree two lies above it namely the (in this case non-rational) zero of $\rho^2+\rho+1$. The zero of $A$ has three rational places lying above it in the extension $\fq(\rho)/\fq(A)$, namely the zero and pole of $\rho$ as well as the zero of $\rho+1$. If the characteristic of $\fq$ is two or three, then no more places ramify in $\fq(\rho)/\fq(A)$. Otherwise, the zero of $A+4/27$ is ramified as well in accordance with Lemma \ref{lem:split1}. One can directly verify, using Equation \eqref{eq:AST}, that three rational places lie above this place, namely the zeroes of $\rho-1$, $\rho+1/2$ and $\rho+2$. Since apart from the ramified places, the Galois group of the extension $\fq(\rho)/\fq(A)$ acts with orbits of length six on the remaining $\fq$-rational places of $\fq(\rho)$, the number of the sought $A$ is equal to one-sixth of the number of rational places of $\fq(\rho)$ that do not ramify in the extension $\fq(\rho)/\fq(A)$. Now using the above and distinguishing congruence classes of $q$ modulo six, we find that
$$N_3 = \left\{ 
\begin{array}{rl}
\frac{q-3}{6}  & \text{if $q \equiv 0 \pmod{3}$,}  \\[5pt]
\frac{q-7}{6}  & \text{if $q \equiv 1 \pmod{6}$,}  \\[5pt]
\frac{q-2}{6}  & \text{if $q \equiv 2 \pmod{6}$,}  \\[5pt]
\frac{q-4}{6}  & \text{if $q \equiv 4 \pmod{6}$,}  \\[5pt]
\frac{q-5}{6}  & \text{if $q \equiv 5 \pmod{6}$.} 
\end{array}
\right.$$ 
This establishes the assertion and completes the proof. 
\end{proof}

\begin{theorem}\label{thm:d=3}
For $q \ge 3$, there exists an absolutely irreducible cubic curve defined over $\fqs$ that intersects $\cH_q$ in $3(q+1)$ many distinct $\fqs$-rational points.
\end{theorem}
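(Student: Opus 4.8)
The plan is to reduce Theorem \ref{thm:d=3} to the counting result of Lemma \ref{lem:d=3} combined with the geometric translation established earlier in Lemma \ref{lem:split1}. Recall that by Lemma \ref{lem:split1}, for $\alpha \in \fqs \setminus \{0\}$ the curve $\cC_3$ given by $XZ^2 = \alpha Y^3$ meets $\cH_q$ in exactly $3(q+1)$ distinct $\fqs$-rational points precisely when the polynomial $A t^3 + t + 1 \in \fq[t]$ splits over $\fq$, where $A = \alpha^{q+1}$. So it suffices to exhibit, for every $q \ge 3$, at least one value $A \in \fq \setminus \{0\}$ for which $At^3+t+1$ splits over $\fq$ \emph{and} which is of the form $\alpha^{q+1}$ for some $\alpha \in \fqs \setminus \{0\}$. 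The latter is no obstruction, since the norm map $\fqs^\ast \to \fq^\ast$, $\alpha \mapsto \alpha^{q+1}$, is surjective; hence \emph{every} $A \in \fq \setminus \{0\}$ is a $(q+1)$-th power, and any splitting value of $A$ yields a suitable $\alpha$.

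Thus the entire statement hinges on showing that at least one such $A$ exists. This follows directly from Lemma \ref{lem:d=3}, which asserts that the number $N_3$ of $A \in \fq \setminus \{0\}$ for which $At^3+t+1$ splits equals $\lfloor (q-2)/6 \rfloor$. So the second step is simply to verify that $\lfloor (q-2)/6 \rfloor \ge 1$ for all $q \ge 3$ that occur, i.e. to check the small cases. A direct inspection of the case-by-case formula for $N_3$ in the proof of Lemma \ref{lem:d=3} shows $N_3 = 0$ precisely when $q \in \{3,4,5,7\}$ (the values where $(q-3)/6$, $(q-4)/6$, $(q-5)/6$, or $(q-7)/6$ vanishes), and $N_3 \ge 1$ for all $q \ge 8$. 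Therefore Lemma \ref{lem:d=3} alone settles every $q \ge 8$, and one is left to treat the finitely many exceptional prime powers $q \in \{3,4,5,7\}$ separately.

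For these exceptional cases the plan is an explicit construction: for each such $q$ one directly produces a single value $A \in \fq \setminus \{0\}$ (equivalently, a single cubic $At^3+t+1$) that splits over $\fq$, by an ad hoc search, and then takes any $\alpha$ with $\alpha^{q+1}=A$. Since these are just four small fields, this can be carried out by hand or by a short computer verification. Absolute irreducibility of the resulting curve $\cC_3 : XZ^2 = \alpha Y^3$ is immediate, as it is a nondegenerate model and $\alpha \neq 0$ guarantees it is not a union of lines; alternatively one notes directly that $XZ^2 - \alpha Y^3$ is irreducible over $\overline{\fqs}$ since it is linear in $X$. This handles the cubic claim over $\overline{\fqs}$, completing the theorem.

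The main obstacle I anticipate is not the generic range $q \ge 8$, which Lemma \ref{lem:d=3} dispatches cleanly, but rather guaranteeing a splitting cubic in the four small exceptional cases where the counting formula gives $N_3 = 0$. There the general Galois-theoretic/Hasse--Weil machinery provides no example, and one must either find an explicit splitting value of $A$ by hand or, if none exists within the present family $XZ^2 = \alpha Y^3$, replace the family by a different irreducible cubic tailored to that particular $q$. Verifying that a genuinely irreducible cubic (distinct from $\cH_q$) with the required $3(q+1)$ rational intersection points really exists for each of $q \in \{3,4,5,7\}$ is the delicate point, and is where a short explicit computation will be needed.
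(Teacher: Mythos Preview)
Your approach is essentially the same as the paper's: invoke Lemma~\ref{lem:d=3} to handle $q \ge 8$ via the family $\cC_3: XZ^2 = \alpha Y^3$, and treat $q \in \{3,4,5,7\}$ by explicit construction. One point needs correcting, however: your third paragraph proposes to ``produce a single value $A \in \fq \setminus\{0\}$ \dots\ that splits over $\fq$'' for each of $q \in \{3,4,5,7\}$, but you have just established that $N_3 = 0$ for precisely these $q$, so \emph{no} such $A$ exists and the family $\cC_3$ genuinely fails there. You acknowledge this possibility in your final paragraph, and that is indeed what happens; the paper resolves it by exhibiting, for each of $q=3,4,5,7$, an explicit irreducible cubic \emph{outside} the $\cC_3$ family (e.g.\ $X^3+Z^3-Y^2Z$ for $q=5$) and verifying computationally that it meets $\cH_q$ in $3(q+1)$ rational points.
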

\begin{proof}
First, assume that $q\ge 8$. From Lemma \ref{lem:d=3}, there exists $A\in \fq \setminus \{0\}$ such that $At^3+t+1 \in \fq[t]$ splits over $\fq$. The assertion now follows from Lemma \ref{lem:split1}. 

When $3 \le q \le 7$, we consider the polynomial  
$$
f(X, Y, Z) := 
\begin{cases}
    X^3+Y^3+Z^3+ XY^2+X^2Z - YZ^2, \ \ &\text{if} \ \ q = 3 \\
    X^3+Y^3+Z^3+XY^2+X^2Z + YZ^2+ XZ^2, \ \ &\text{if} \ \ q = 4 \\
    X^3+Z^3-Y^2Z, \ \ &\text{if} \ \ q=5 \\
    X^3 +4XY^2 + YZ^2,   \ \ &\text{if} \ \ q = 7.
\end{cases}
$$

It can then be checked computationally that the cubic given by the equation $f(X, Y, Z) = 0$ satisfy the desired property.
This completes the proof. 
\end{proof}

Having settled the case $d=3$ completely, we now consider the case $d=4$.

\begin{lemma}\label{lem:d=4}
Let $N_4$ denote the number of $A \in \fq \setminus \{0\}$ for which the polynomial $At^4+t+1$ splits over $\fq$. Then 
$$
N_4 = \left\{ 
    \begin{array}{cl}
        0 & \text{ if } q = 2^e \text{ and } e \text{ is odd,} \\[5pt]
        \frac{q-4}{12} & \text{ if } q = 2^e \text{ and } e \text{ is even,} \\[5pt]
        \frac{q+1}{24} & \text{ if } q\equiv 23 \pmod {24} \text{ and } \\[5pt]
        \left\lfloor\frac{q-2}{24}\right\rfloor & \text{ otherwise.} 
    \end{array}
\right.
$$ 
\end{lemma}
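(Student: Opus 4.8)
The plan is to mirror the argument of Lemma \ref{lem:d=3}: reduce the count $N_4$ to a count of rational places of the splitting field $F_4$ of $At^4+t+1$ over $\fq(A)$. The specialization $A=A(\tilde P)$ makes $At^4+t+1$ split over $\fq$ exactly when the rational place $\tilde P$ of $\fq(A)$ splits completely in $\fq(T_1)/\fq(A)$, equivalently (since $F_4$ is the Galois closure) in $F_4/\fq(A)$. Since an unramified $\fq$-rational place of $F_4$ lying over a rational $\tilde P$ has trivial decomposition group, every such place sits over a completely split $\tilde P$, and each completely split $\tilde P$ carries exactly $[F_4:\fq(A)]$ of them. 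Hence, provided the full constant field of $F_4$ is $\fq$,
$$ N_4 = \frac{(\#\,\fq\text{-rational places of }F_4) - r}{[F_4:\fq(A)]}, $$
where $r$ counts the $\fq$-rational places of $F_4$ that ramify over $\fq(A)$; if the constant field is strictly larger, then $N_4=0$.

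First I would pin down $[F_4:\fq(A)]$, the genus, and the constant field case by case. If $\gcd(q,12)=1$, Theorem \ref{thm:genus} gives degree $24$, constant field $\fq$, and genus $g_4=1+\frac{16-20+2}{4}\cdot 2=0$, so $F_4$ is rational with $q+1$ rational places. If $\char\fq=2$ then $d=4=2^2$, so Theorem \ref{thm:d=pe} applies: $F_4=\mathbb{F}_4\cdot\fq(\rho)$ has degree $12$, and its constant field exceeds $\fq$ precisely when $\mathbb{F}_4\not\subseteq\fq$, i.e.\ when $e$ is odd, forcing $N_4=0$ there. If $\char\fq=3$ then $d=4=3+1=p^e+1$ with $p^e=3$, so Theorem \ref{thm:d=pe+1} gives degree $d(d-1)(d-2)=24$, constant field $\fq$ (as $\mathbb{F}_3\subseteq\fq$), and a rational $F_4$ with $q+1$ rational places. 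In each genus‑zero case it remains only to compute $r$ using the explicit formula \eqref{eq:Ainrho} and the ramification data of Lemma \ref{lem:rho_ext}, Corollary \ref{cor:d=pe}, Corollary \ref{cor:d=pe+1}. For $\char\fq=2$ with $e$ even, only $A=0$ and $A=\infty$ ramify, contributing four $\fq$-rational places of index $3$ and one totally ramified $\fq$-rational place respectively, so $r=5$ and $N_4=(q-4)/12$. For $\char\fq=3$ (writing $q=3^m$, so $q\equiv 3\pmod{24}$ for $m$ odd and $q\equiv 9\pmod{24}$ for $m$ even), Corollary \ref{cor:d=pe+1} gives $4$ rational places of index $6$ over $A=0$ and $6$ places of index $4$ over $A=\infty$, the latter rational iff $\mathbb{F}_9\subseteq\fq$, i.e.\ iff $m$ is even; thus $r=4$ ($m$ odd) or $r=10$ ($m$ even), and in both cases $(q+1-r)/24=\lfloor(q-2)/24\rfloor$, placing these $q$ in the ``otherwise'' branch.

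The heart of the proof, and the main obstacle, is the case $\gcd(q,12)=1$, where $F_4/\fq(A)$ is an $S_4$-extension ramified over $P_\infty$ ($A=\infty$), $P_0$ ($A=0$), and $P$ ($A=27/256$), with inertia generated by a $4$-cycle, a $3$-cycle and a transposition, giving $6$, $8$ and $12$ geometric places respectively. I would determine the residue field over each. Over $P_0$ the three ``large'' roots behave like $\omega^j(-1/A)^{1/3}$ with $\omega$ a primitive cube root of unity, so the residue field is $\fq(\omega)$. Over $P$ the two roots not colliding at the double root are the roots of $3x^2-8x+16$, of discriminant $-128$, so the residue field is $\fq(\sqrt{-2})$. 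Over $P_\infty$ the subtle point is that although each root behaves like $(-1/A)^{1/4}$ times a fourth root of unity, the factor $(-1)^{1/4}$ cancels in the \emph{ratios} $T_j/T_1$ that generate $F_4$, so the residue field is $\fq(i)$ rather than $\fq(\zeta_8)$. Consequently the $6$, $8$, $12$ places are $\fq$-rational exactly when $q\equiv 1\pmod 4$, $q\equiv 1\pmod 3$, and $q\equiv 1,3\pmod 8$ respectively (equivalently $i,\omega,\sqrt{-2}\in\fq$, and since a non-square prime residue becomes square only in even-degree extensions, each condition depends only on $q\bmod 24$).

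Running through the eight residues of $q$ modulo $24$ coprime to $6$ then yields $r\in\{0,6,8,12,14,18,20,26\}$, and a direct check shows $(q+1-r)/24$ equals $(q+1)/24$ exactly in the unique class $q\equiv 23\pmod{24}$ (where $r=0$) and equals $\lfloor(q-2)/24\rfloor$ in all other classes; together with the $\char\fq\in\{2,3\}$ analysis this gives the stated formula for $N_4$. I expect the genuine difficulty to lie entirely in this local analysis at the ramified places — especially obtaining $\fq(i)$ (not $\fq(\zeta_8)$) at $P_\infty$ by working with the intrinsic root-ratios, and confirming that each of the three splitting conditions is governed purely by $q\bmod 24$ even when $q$ is a proper prime power.
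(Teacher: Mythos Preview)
Your proposal is correct and follows the same overall strategy as the paper: identify $F_4$ as a rational function field (via Theorems \ref{thm:genus}, \ref{thm:d=pe}, \ref{thm:d=pe+1}), count its $q+1$ rational places, subtract the ramified rational places, and divide by $[F_4:\fq(A)]$.

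The only genuine methodological difference lies in how the residue fields at the ramified places are identified. The paper works in the intermediate field $\fq(\rho)$, using the explicit rational expression $A=\rho^3(\rho^2+\rho+1)^3/(\rho^3+\rho^2+\rho+1)^4$ from Equation \eqref{eq:Ainrho}: the factorizations $\rho^3+\rho^2+\rho+1=(\rho+1)(\rho^2+1)$, the irreducibility of $\rho^2+\rho+1$, and the quadratic $\rho^2+2\rho+3=(\rho+1)^2+2$ arising at $T_1=-4/3$ directly yield the conditions $q\equiv 1\pmod 4$, $q\equiv 1\pmod 3$, and $-2\in(\fq^\times)^2$. You instead carry out a Puiseux-type analysis of the roots $T_i$ themselves (ratios of large roots at $A\to 0$ giving $\omega$, the non-colliding quadratic $3x^2-8x+16$ of discriminant $-128$ at $A=27/256$ giving $\sqrt{-2}$, and the ratio argument at $A\to\infty$ giving $i$ rather than $\zeta_8$). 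Both routes produce exactly the same three congruence conditions and hence the same table of $r$-values; the paper's factoring in $\fq[\rho]$ is a bit more concrete, while your root-asymptotic viewpoint makes the appearance of $\fq(i)$ at $P_\infty$ (and not $\fq(\zeta_8)$) conceptually transparent.
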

\begin{proof}
First, Theorems \ref{thm:genus}, \ref{thm:d=pe} and \ref{thm:d=pe+1} imply that $F_4$ has genus zero. 
Now, the given expression for $N_4$ follows from a case-by-case analysis. 

We consider the case $q=2^e$ first. Using Corollary \ref{cor:d=pe} we immediately get the result when $e$ is odd. When $e$ is even, Theorem \ref{thm:d=pe} implies that $F_d = \fq(\rho)$, so the degree of the extension $F_4/\fq(A)$ is 12. The pole of $A$ is totally ramified and there are four rational places above the zero of $A$, each with ramification index three. Hence, $\fq(\rho)$ has exactly five ramified rational places, and the result now follows by a similar reasoning as in Lemma \ref{lem:d=3}.

If $q$ is odd, then the extension $F_4/\fq(A)$ has degree $24$. The rationality of the ramified places in $F_4/\fq(A)$ can be determined by studying what happens in the extension $\fq(\rho)/\fq(A)$. When $d= 4$, then Equation \eqref{eq:Ainrho} is equivalent to $$A=\rho^3(\rho^2+\rho+1)^3/(\rho^3+\rho^2+\rho+1)^4.$$ 
The places of $F_4$ above the zero of $A$ are all rational if and only if the polynomial $\rho^2+\rho+1$ splits over $\fq$. Indeed, if $\rho^2+\rho+1$ splits over $\fq$, then the pole of $T_1$ in $\fq(T_1)$, which lies above the zero of $A$, splits in $\fq(\rho)/\fq(T_1)$ and hence in $F_4/\fq(\rho)$. On the other hand, if it does not split, then there is a place of $\fq(\rho)$ of degree two above the zero of $A$. Note that $\rho^2+\rho+1$ splits over $\fq$ if and only if $3 \mid q-1$, that is $q \equiv 1 \pmod 3$. 

The pole of $A$ is totally ramified in $\fq(T_1)/\fq(A)$. Using the factorization $\rho^3+\rho^2+\rho+1=(\rho+1)(\rho^2+1)$, we see that $\rho^3+\rho^2+\rho+1$ splits in $\fq(\rho)/\fq(T_1)$ if and only if the polynomial $\rho^2+1$ splits over $\fq$. Consequently, $\rho^3+\rho^2+\rho+1$ splits over $\fq$ if and only if $q \equiv 1 \pmod{4}$.

If $\char \ \fq \neq 2, 3$, then the zero of $A-27/256$ has three places of $\fq(T_1)$ lying above it. The only ramified place is the zero of $T_1+4/3$. This place splits in $\fq(\rho)/\fq(T_1)$ if and only if the polynomial $\rho^2+2\rho+3=(\rho+1)^2+2$ splits over $\fq$. This happens if and only if $q \equiv 1 \pmod{8}$ or $q \equiv 3 \pmod{8}$.
Now, the assertion follows using similar steps as in the proof of Lemma \ref{lem:d=3}.
\end{proof}

The lemma immediately gives the following theorem. 

\begin{theorem}\label{thm:d=4}
Suppose that either $q \in \{16,23\}$ or $q \ge 27$ is a prime power but not an odd power of two. Then there exists an absolutely irreducible quartic curve defined over $\fqs$ that intersects  $\cH_q$ in $4(q+1)$ distinct $\fqs$-rational points.
\end{theorem}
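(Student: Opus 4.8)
The plan is to derive Theorem \ref{thm:d=4} directly from Lemma \ref{lem:d=4} together with Lemma \ref{lem:split1}, so that essentially no new work beyond a case analysis is required. First I would observe that the hypothesis on $q$ is engineered to be precisely the condition $N_4 > 0$. To see this I would run through the four cases of the formula for $N_4$: when $q = 2^e$ with $e$ odd we get $N_4 = 0$ (the excluded odd powers of two); when $q = 2^e$ with $e$ even, $N_4 = (q-4)/12 > 0$ exactly for $q \ge 16$; when $q \equiv 23 \pmod{24}$, $N_4 = (q+1)/24 > 0$ always, the smallest instance being $q = 23$; and in the remaining case $N_4 = \lfloor (q-2)/24\rfloor > 0$ exactly for $q \ge 26$, hence for every prime power $q \ge 27$. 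A short check over the prime powers $q < 27$ confirms that only $q = 16$ and $q = 23$ yield $N_4 > 0$, which is why these two values are singled out. This bookkeeping is really the only content of the argument, and the main thing to be careful about is matching the congruence conditions against the list of small prime powers so that no case is missed or double-counted.

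Next, given that $N_4 > 0$, there is some $A \in \fq \setminus \{0\}$ for which $At^4 + t + 1$ splits over $\fq$. To invoke Lemma \ref{lem:split1} I need to realize $A$ as a norm, i.e. to write $A = \alpha^{q+1}$ for some $\alpha \in \fqs \setminus \{0\}$. This is immediate because the norm map $\fqs \setminus \{0\} \to \fq \setminus \{0\}$ given by $\alpha \mapsto \alpha^{q+1}$ is surjective for the quadratic extension $\fqs/\fq$, so every nonzero element of $\fq$ arises in this way. With such an $\alpha$ fixed, Lemma \ref{lem:split1} tells us directly that the curve $\cC_4$ given by $XZ^3 = \alpha Y^4$ meets $\cH_q$ in exactly $4(q+1)$ distinct $\fqs$-rational points.

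Finally I would verify that $\cC_4$ is absolutely irreducible. The homogeneous defining polynomial $XZ^3 - \alpha Y^4$ is linear in $X$, so over an algebraic closure any factorization would force a nonconstant factor in $\fqs[Y,Z]$ dividing both $Z^3$ and $\alpha Y^4$; since these are coprime, no such factor exists and the polynomial is absolutely irreducible for every $\alpha \ne 0$. This step, although routine, is not automatic and should be recorded explicitly, since for more general curve families the corresponding irreducibility can fail. Combining the three steps yields an absolutely irreducible quartic over $\fqs$ intersecting $\cH_q$ in $4(q+1)$ distinct $\fqs$-rational points, as claimed. The essential difficulty of the theorem is therefore entirely absorbed into the genus-zero computation of Lemma \ref{lem:d=4}; the present argument is purely a matter of reading off the positivity of $N_4$ and translating a splitting polynomial back into a curve.
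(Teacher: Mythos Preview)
Your proposal is correct and follows exactly the approach the paper intends: the paper's own proof is the single sentence ``The lemma immediately gives the following theorem,'' and your argument simply unpacks what that sentence means---the case analysis showing the hypothesis on $q$ is equivalent to $N_4>0$, the surjectivity of the norm to write $A=\alpha^{q+1}$, and the absolute irreducibility of $XZ^3=\alpha Y^4$---all of which are implicit in the setup at the start of Section~\ref{sec:degree_small} and in Lemma~\ref{lem:split1}. There is no substantive difference between your route and the paper's; you have just written out the bookkeeping the paper leaves to the reader.
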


\begin{remark}\normalfont
     The results from the previous sections also imply the existence of a quartic with this property for $q \in \{ 2, 3, 4, 7, 8\}$. For $q \in \{5, 9, 11, 13, 17, 19, 25\}$ one can choose the quartic given by $f(X,Y,Z) = 0$ with
     
     $$
        f(X, Y, Z) := 
            \begin{cases}
                X^3Y + 2Y^2Z^2 + Z^4, \ \ &\text{if} \ \ q = 5 \\
                X^4 + Y^3Z - Y^2Z^2 + YZ^3, \ \ &\text{if} \ \ q = 9 \\
                X^4 - Y^4 - \omega^{16}Z^4, \ \ &\text{if} \ \ q=11 \\
                X^3Y + Y^3Z + XZ^3, \ \ &\text{if} \ \ q=13 \\
                X^4 + 13Y^3Z + 14Y^3Z^2, \ \ &\text{if} \ \ q=17 \\
                X^4 - \omega^4Y^4 - \omega^{24} Z^4, \ \ &\text{if} \ \ q=19 \\
                X^2Y^2 + X^2Z^2 + Y^2Z^2,   \ \ &\text{if} \ \ q = 25,
            \end{cases}
    $$
    \\
    where $\omega$ is a primitive element of $\fqs$. Combining this with Theorem \ref{thm:d=4}, we see that the case $d=4$ is settled, except if $q>8$ is an odd power of two.
\end{remark}

We conclude this article by briefly outlining the scenario in the case when $d = 5$ and $d =6$. The results from the previous sections imply the existence of an irreducible quintic intersecting the Hermitian in $5(q+1)$ distinct $\fqs$-rational points for $q \in \{3,4,9\}$. Moreover, Corollary \ref{cor:Fdbound} applies whenever $q > 233$ and $\gcd(q, 20) = 1$. A direct check with a computer shows that this bound can be improved to $q > 131$; we find that $q = 131$ is the largest value of $q$, with $\gcd(q,20)=1$, for which the polynomial never splits. There are several values of $q < 131$ satisfying $\gcd(q,20) = 1$, for which the polynomial splits for at least one choice of $A$. In fact, this happens for 
$$
    q \in \{67, 79, 83, 101, 103, 107, 109, 113, 121, 127\}.  
$$
For $q = 5^e$, it follows from Corollary \ref{cor:d=pe} that a desired $A \in \fq \setminus\{0\}$ exists if and only if $e > 1$. Similarly, we conclude from Corollary \ref{cor:d=pe+1} that a suitable $A \in \fq\setminus\{0\}$ exists for $q = 2^e$ exactly when $e$ is even and $e>2$. Hence the case $d=5$ is settled except for finitely many small values of $q$ and except for the case where $q$ is an odd power of two.
However, counting the number of $A\in \fq\setminus\{0\}$ for which the polynomial $At^d + t + 1$ splits is more complicated now. One of the main reasons for this is that the splitting behavior above the place corresponding to $T_1 = -d/(d-1)$ in the extension $\fq(\rho)/\fq(T_1)$ depends on the splitting of the polynomial $t^3 + 2t^2 + 3t + 4 \in \fq[t]$. The corresponding polynomials were easier to handle for $d = 3$ and $d=4$ since they were of degree one and two, respectively. Another reason is that the splitting field is not always rational any more. Indeed, by Theorem \ref{thm:genus}, the genus is $4$ when $\gcd(q, 20) = 1$, so the exact total number of $\fq$-rational places is harder to determine. 

For $d = 6$ the results of the previous sections guarantee a positive answer for $q\in \{3,4,5,11\}$. Besides from this, we remark that Corollary \ref{cor:Fdbound} applies for $q > 10766$ and that calculations with a computer show that this bound can be lowered to $q > 1877$. More generally, in the cases where Corollary \ref{cor:d=pe} and \ref{cor:d=pe+1} do not apply, some more work would be needed to determine exactly what happens when $\gcd(q, d(d-1)) > 1$, but this is left as future work. 

\section*{Acknowledgments}
This work was supported by a research grant (VIL”52303”) from Villum Fonden. Most of the work towards this project was done during a collaboration visit by Mrinmoy Datta to the Technical University of Denmark. He sincerely thanks the university for providing the opportunity to visit, warm hospitality, and the great environment supporting the work. Moreover, Mrinmoy Datta is partially supported by  the grant DST/INT/RUS/RSF/P-41/2021 from the Department of Science and Technology, Govt. of India and the grant SRG/2021/001177 from Science and Engineering Research Board, Govt. of India.

\end{document}